
\documentclass[reqno]{amsart}
\usepackage{hyperref}

\newtheorem{theorem}{Theorem}[section]

\newtheorem{lemma}[theorem]{Lemma}
\newtheorem{proposition}[theorem]{Proposition}
\newtheorem{corollary}[theorem]{Corollary}
\newtheorem{remark}[theorem]{Remark}

\newcommand{\R}{{\mathbb R}}
\newcommand{\N}{{\mathbb N}}

\newcommand{\C}{{\mathbb C}}

\newcommand{\nn}{\nonumber}
\newcommand{\be}{\begin{equation}}
\newcommand{\ee}{\end{equation}}
\newcommand{\bea}{\begin{eqnarray}}
\newcommand{\eea}{\end{eqnarray}}

\newcommand{\ol}{\overline}
\newcommand{\ti}{\tilde}

\newcommand{\E}{\mathrm{e}}
\newcommand{\I}{\mathrm{i}}

\newcommand{\im}{\mathrm{Im}}
\newcommand{\re}{\mathrm{Re}}

\newcommand{\floor}[1]{\lfloor#1 \rfloor}

\newcommand{\eps}{\varepsilon}

\newcommand{\sig}{\sigma}
\newcommand{\lam}{\lambda}


\numberwithin{equation}{section}


\begin{document}

\title[On the Absolutely Continuous Spectrum of SL Operators]{On the Absolutely Continuous
Spectrum of Sturm--Liouville Operators with Applications to Radial Quantum Trees}

\author[M. Schmied]{Michael Schmied}
\address{Faculty of Mathematics\\
Nordbergstrasse 15\\ 1090 Wien\\ Austria}

\author[R. Sims]{Robert Sims}
\address{Faculty of Mathematics\\
Nordbergstrasse 15\\ 1090 Wien\\ Austria}
\email{\href{mailto:Robert.Sims@univie.ac.at}{Robert.Sims@univie.ac.at}}

\author[G. Teschl]{Gerald Teschl}
\address{Faculty of Mathematics\\
Nordbergstrasse 15\\ 1090 Wien\\ Austria\\ and International Erwin Schr\"odinger
Institute for Mathematical Physics, Boltzmanngasse 9\\ 1090 Wien\\ Austria}
\email{\href{mailto:Gerald.Teschl@univie.ac.at}{Gerald.Teschl@univie.ac.at}}
\urladdr{\href{http://www.mat.univie.ac.at/~gerald/}{http://www.mat.univie.ac.at/\~{}gerald/}}

\thanks{{\it Oper. Matrices (to appear)}}
\thanks{{\it Research supported by the Austrian Science Fund (FWF) under Grant No.\ Y330}}

\keywords{Sturm--Liouville operators, absolutely continuous spectrum, subordinacy, quantum graphs}
\subjclass[2000]{Primary 34L05, 81Q10; Secondary 34L40, 47E05}

\begin{abstract}
We consider standard subordinacy theory for general Sturm--Liouville operators and
give criteria when boundedness of solutions implies that no subordinate solutions exist.
As applications, we prove a Weidmann-type result for general Sturm--Liouville operators
and investigate the absolutely continuous spectrum of radially symmetric quantum trees.
\end{abstract}

\maketitle

%
%
%

\section{Introduction}

Schr\"odinger operators on graphs (both discrete and continuous) have a long tradition
in both the physics and mathematics literature. In particular, the literature on this subject
is quite extensive and we only refer to \cite{car1,car2,ku1,ku2} and the references therein as
a starting point. For related results on random trees we refer to \cite{asw1,asw2}.
The purpose of this note is to investigate the relation of the growth of solutions
with the spectral properties for such quantum graphs. In \cite{ku2} it was shown that
if the graph is of sub-exponential grow, then existence of a bounded solution implies that
the corresponding energy is in the spectrum. Moreover, for Schr\"odinger operators on the
line it is well-known (\cite{gilper,si,Stolz2}) that boundedness of solutions implies that the corresponding
energy is in the absolutely continuous spectrum. Our motivation was to prove such kind of results
for quantum trees. As a first step we consider radially symmetric quantum trees, that is, trees whose
branching and edge lengths depend only on the distance from the root, which can be reduced
to the study of general Sturm--Liouville equations with weights (\cite{car2,so1}).

Hence the purpose of this paper is twofold, to relate boundedness of solutions of general Sturm--Liouville
equations with the existence of purely absolutely continuous spectrum and to apply these results to 
radially symmetric quantum trees.

We begin by fixing our notation.
We will consider Sturm--Liouville operators on $L^{2}((a,b), r\,dx)$
with $-\infty \le a<b \le \infty$ of the form
\begin{equation} \label{stli}
\tau = \frac{1}{r} \Big(- \frac{d}{dx} p \frac{d}{dx} + q \Big),
\end{equation}
where the coefficients $p,q,r$ are real-valued
satisfying
\begin{equation} \label{eq:baseass}
p^{-1},q,r \in L^1_{loc}(a,b), \quad p,r>0.
\end{equation}
We will use $\tau$ to describe the formal differentiation expression and
$H$ for the operator given by $\tau$ with separated boundary conditions at
$a$ and/or $b$.

If $a$ (resp.\ $b$) is finite and $q,p^{-1},r$ are in addition integrable
near $a$ (resp.\ $b$), we will say $a$ (resp.\ $b$) is a \textit{regular}
endpoint.  We will say $\tau$ respectively $H$ is \textit{regular} if
both $a$ and $b$ are regular. 

For every $z\in\C\backslash\sig_{ess}(H)$ there is a unique (up to a constant) solution
$u_a(z,x)$ of $\tau u = z u$ which is in $L^2$ near $a$ and satisfies the
boundary condition at $a$ (if any). Similarly there is such a solution $u_b(z,x)$
near $b$.

For the purpose of investigating the absolutely continuous spectrum it is well-known
that it suffices to consider the case where one endpoint, say $a$, is regular.
In this case a key role is played the Weyl $m$-function
\be
m_b(z) = \frac{pu_b'(z,a)}{u_b(z,a)}
\ee
which is a Herglotz function and satisfies
\be\label{immb}
\im( m_b(z) ) =  \im(z) \int_a^b |u_b(z,x)|^2 r(x)\, dx,
\ee
if $u_b(z,x)$ is normalized according to
\be \label{ubweylm}
u_b(z,x) = c(z,x) + m_b(z) s(z,x).
\ee
Here $c(z,x)$ and $s(z,x)$ are the solutions of $\tau u = z u$ corresponding to the
initial conditions $c(z,a)= ps'(z,a)=1$, $s(z,a)= pc'(z,a)=0$.

In addition, we will also need the Weyl $m$-functions $m_{b,\alpha}(z)$ corresponding to
the boundary conditions
\be \label{eqbc}
\cos(\alpha) f(a) - \sin(\alpha) pf'(a) =0.
\ee
Then $m_b(z)=m_{b,0}(z)$ corresponds to the Dirichlet boundary condition $f(a)=0$ and
we have the following well-known relation
\be \label{eqwmab}
m_{b,\alpha}(\lam) = \frac{\cos(\alpha-\beta) m_{b,\beta}(\lam) + \sin(\alpha-\beta)}{\cos(\alpha-\beta)
- \sin(\alpha-\beta) m_{b,\beta}(\lam)}.
\ee
We refer the interested reader to \cite{CodLev,te,wdln} for relevant background information.

%
%
%
%

\section{Subordinacy}

In this section, we will present a streamlined approach to those aspects of 
the method of subordinacy, as introduced by Gilbert and Pearson in \cite{gilper}
(see \cite{clhi} for the case of Sturm--Liouville operators, see also \cite{cl}), 
which pertain to the absolutely continuous spectrum of the operator $H$ defined
in the previous section.  As our applications involve Sturm--Liouville equations, we
will discuss this method in precisely that context. For a more elementary approach
under somewhat stronger assumptions (a kind of uniform subordinacy) we refer to
Weidmann \cite{wd2}. Without loss of generality, we will assume that $a$ is regular 
and $b$ is limit point.

The following facts are well-known: the self-adjoint operator $H$ corresponding to (\ref{stli}), with
a suitable choice of boundary condition, is unitarily equivalent to multiplication by $\lam$ in the space 
$L^2(\R,d\mu)$, where $\mu$ is the measure associated to the Weyl $m$-function. 
For this reason, the set
\be
M_s \, = \, \{\lam \in \mathbb{R} \, | \,  \limsup_{\eps\downarrow 0}\im(m_b(\lam+\I\eps)) \, = \, \infty \, \}
\ee
is a support for the singularly continuous spectrum of $H$, written as $\sigma_{ {\rm sc}}(H)$, 
and moreover,
\be
M_{ac} = \{\lam \in \mathbb{R} \, | \, 0\, < \, \limsup_{\eps\downarrow 0} \im(m_b(\lam+\I\eps)) \, < \, \infty \, \}
\ee
is a minimal support for the absolutely continuous spectrum, similarly written as $\sigma_{{\rm ac}}(H)$.

One also has that $\sig_{{\rm ac}}(H)$ can be recovered from the essential
closure of $M_{ac}$, that is,
\begin{equation}
\sig_{{\rm ac}}(H) \, = \, \ol{M}_{ac}^{ess} \, = \, \{ \lam\in\R \, | \, | \, (\lam-\eps,\lam+\eps) \cap M_{ac} \, | \, > \, 0
\mbox{ for all } \eps \, > \, 0\},
\end{equation}
where $|A|$ denotes the Lebesgue measure of the set $A \subset \mathbb{R}$.

Before we begin our discussion of subordinacy, we present a crucial estimate on 
the imaginary part of the Weyl $m$-function. Let
\begin{equation}
\| f \|_{(a,x)} = \sqrt{\int_a^x |f(y)|^2 r(y) dy}, \qquad x\in(a,b),
\end{equation}
denote the norm of $f \in L^2((a,x), r dy)$. Also, for fixed $\lambda \in \mathbb{R}$, let
$s(\lambda,x)$ (resp. $c(\lambda,x)$) denote the solution of $(\tau - \lambda)u=0$ satisfying
a Dirichlet (resp. Neumann) boundary condition at the regular endpoint $a$. Define $\eps : (a,b) \to (0, \infty)$
by setting 
\begin{equation} \label{defeps}
\eps \, = \, \eps_{\lambda}(x) \, = \, \left( 2 \, \| s(\lambda) \|_{(a,x)} \, \| c(\lambda) \|_{(a,x)} \right)^{-1}.
\end{equation}
As indicated by the notation above, $\eps$ depends on both $\lambda$ and $x$, but we will often suppress this
in our notation below. Observe that for $\lambda \in \mathbb{R}$ fixed, the assumption that $b$ is limit point
guarantees that there is a one-to-one correspondence between
$\eps \in (0, \infty)$ and $x \in (a,b)$. The following estimate was proven by Jitomirskaya and Last:

\begin{lemma}[\cite{jl}] \label{lemjlineq}
Fix $\lambda \in \mathbb{R}$ and define $\eps$ as in (\ref{defeps}) above.
The estimate
\begin{equation}
5 -\sqrt{24} \le | m_b(\lam+\I \eps)| \frac{\| s(\lam) \|_{(a,x)}}{\| c(\lam) \|_{(a,x)}}
\le 5 +\sqrt{24},
\end{equation}
is valid.
\end{lemma}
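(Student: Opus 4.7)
The proof follows the classical Jitomirskaya--Last strategy. The starting point is the representation $u_b(z,y) = c(z,y) + m_b(z) s(z,y)$ for $z = \lambda + \I\eps$, combined with identity (\ref{immb}), which reads
\begin{equation*}
\frac{\im m_b(z)}{\eps} \, = \, \int_a^b |u_b(z,y)|^2 r(y)\,dy.
\end{equation*}
My plan has three steps.

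First, I expand $|u_b(z,y)|^2$ in the basis $\{c(z),s(z)\}$ and use the Cauchy--Schwarz bound $\bigl|\int_a^x \overline{c(z,y)} s(z,y) r(y)\,dy\bigr| \le \|c(z)\|_{(a,x)} \|s(z)\|_{(a,x)}$ on the cross term. This yields the two-sided estimate
\begin{equation*}
\bigl(\|c(z)\|_{(a,x)} - |m_b(z)|\,\|s(z)\|_{(a,x)}\bigr)^2 \, \le \, \int_a^x |u_b(z,y)|^2 r(y)\,dy \, \le \, \bigl(\|c(z)\|_{(a,x)} + |m_b(z)|\,\|s(z)\|_{(a,x)}\bigr)^2.
\end{equation*}
Coupled with the identity above and the trivial inequality $\im m_b(z) \le |m_b(z)|$, this produces an inequality bounding $|m_b(z)|$ in terms of the $L^2$--norms of $c(z)$ and $s(z)$ on the interval $(a,x)$.

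Second, I transfer these bounds from the complex energy $z$ to the real energy $\lambda$. Since $(\tau-\lambda)(c(z)-c(\lambda)) = (z-\lambda)\,c(z)$ with vanishing Cauchy data at $a$, the variation-of-parameters formula with kernel $K(y,y') = c(\lambda,y) s(\lambda,y') - s(\lambda,y) c(\lambda,y')$ yields the integral equation
\begin{equation*}
c(z,y) \, = \, c(\lambda,y) + (z-\lambda) \int_a^y K(y,y') c(z,y') r(y')\, dy',
\end{equation*}
and analogously for $s(z,y)$. A Gronwall-type iteration over the finite interval $(a,x)$, exploiting the normalization $2\eps\|c(\lambda)\|_{(a,x)}\|s(\lambda)\|_{(a,x)} = 1$, shows that $\|c(z)\|_{(a,x)}$ and $\|s(z)\|_{(a,x)}$ are comparable to $\|c(\lambda)\|_{(a,x)}$ and $\|s(\lambda)\|_{(a,x)}$, with explicit universal constants.

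Finally, I substitute the comparison estimates of step two into the inequality from step one. Introducing
\begin{equation*}
T \, := \, |m_b(\lambda+\I\eps)| \, \frac{\|s(\lambda)\|_{(a,x)}}{\|c(\lambda)\|_{(a,x)}}
\end{equation*}
and using $2\eps\|c(\lambda)\|_{(a,x)}\|s(\lambda)\|_{(a,x)} = 1$ to express everything in terms of $T$, the estimate reduces to a quadratic inequality $T^2 - 10 T + 1 \le 0$, whose roots are exactly $5 \pm \sqrt{24}$, giving the stated bound. The main obstacle, and the step that requires the most care, is the Gronwall comparison in step two: the constants in that comparison must be tracked precisely enough to yield the specific quadratic $T^2 - 10T + 1 \le 0$ rather than a looser analogue, since any slack there propagates directly to the final constants.
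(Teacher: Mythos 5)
There is a genuine gap, and it sits exactly where you flagged it: step two. Under the normalization (\ref{defeps}), the Volterra operator
\begin{equation*}
(Vf)(y) \, = \, \int_a^y \big( c(\lam,y)s(\lam,y') - s(\lam,y)c(\lam,y') \big) f(y')\, r(y')\, dy'
\end{equation*}
driving your integral equation satisfies, on $L^2((a,x),r\,dy)$, only the bound $\eps \norm{V} \le 2\eps \norm{s(\lam)}_{(a,x)}\norm{c(\lam)}_{(a,x)} = 1$; the norm bound is exactly one, not less than one, so the iteration does not contract. One direction of the comparison gives only $\norm{c(\lam)}_{(a,x)} \le 2\norm{c(z)}_{(a,x)}$, while the reverse direction needs the full Neumann series of iterated Volterra kernels, whose sum is a constant well above $2$ (roughly $\sum_n ((n-1)!)^{-1/2} \approx 4.4$ via Hilbert--Schmidt estimates). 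Feeding any such constants into your step three produces a quadratic with strictly worse roots than $5\pm\sqrt{24}$. Worse, the mismatch is structural, not a matter of bookkeeping: if your comparison were perfect (constants equal to $1$), your steps one and three would give $(1-T)^2 \le 2T$, i.e.\ $T^2-4T+1\le 0$, a different quadratic altogether. So the constants $5\pm\sqrt{24}$ cannot be reached by tuning your step two; they do not arise from the decomposition ``complex basis versus real basis''. (A looser two-sided bound would suffice for the subsequent application in Lemma~\ref{lemsuiffmpm}, but it does not prove the lemma as stated.)

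The paper's proof avoids the basis comparison entirely, which is the idea you are missing. Apply variation of constants once, directly to the single complex-energy solution, written in the \emph{real}-energy basis:
\begin{equation*}
u_b(\lam+\I\eps,t) = c(\lam,t) + m_b(\lam+\I\eps)s(\lam,t)
- \I\eps \int_a^t \big( c(\lam,t)s(\lam,y) - c(\lam,y)s(\lam,t) \big) u_b(\lam+\I\eps,y)\, r(y)\, dy .
\end{equation*}
The triangle inequality plus Cauchy--Schwarz on the integral term, together with $2\eps\norm{s(\lam)}_{(a,x)}\norm{c(\lam)}_{(a,x)}=1$, give $\norm{c(\lam)+m_b s(\lam)}_{(a,x)} \le 2\norm{u_b}_{(a,x)}$; then monotonicity of the norm in $x$, identity (\ref{immb}), and $\im(m_b)\le|m_b|$ give $\norm{c(\lam)+m_b s(\lam)}_{(a,x)}^2 \le 8\norm{s(\lam)}_{(a,x)}\norm{c(\lam)}_{(a,x)}|m_b|$; finally the reverse triangle inequality and division by $\norm{c(\lam)}_{(a,x)}^2$ yield $(1-t)^2\le 8t$ for $t = |m_b|\norm{s(\lam)}_{(a,x)}/\norm{c(\lam)}_{(a,x)}$, whose solution set is exactly $[5-\sqrt{24},\,5+\sqrt{24}]$. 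The point is that only one function is ever estimated at complex energy, so no comparison between the bases at $z$ and at $\lam$ --- and hence no loss of constants --- occurs. If you replace your steps one and two by this single identity, your step three goes through essentially verbatim, with the factor $8$ in place of your factor $2$.
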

We present a proof for the sake of completeness.
\begin{proof}
Let $x \geq t \geq a$. By variation of constants, the solution $u_b(\lambda + i \eps)$, as
defined in (\ref{ubweylm}), can be written
as
\be 
\begin{split}
u_b(\lam+\I\eps)(t) = c(\lam,t) &- m_b(\lam+\I\eps) s(\lam)(t)\\
& - \I\eps\int_a^t \big( c(\lam,t) s(\lam,y) -
c(\lam,y) s(\lam,t) \big) u_b(\lam+\I\eps,y) \, r(y) dy.
\end{split}
\ee
Hence one obtains after a little calculation
\bea \nn
&& \|c(\lam) - m_b(\lam+\I\eps) s(\lam) \|_{(a,x)} \le \|u_b(\lam+\I\eps) \|_{(a,x)}\\
&& \qquad {} +2 \eps \| s(\lam) \|_{(a,x)} \| c(\lam) \|_{(a,x)}
\|u_b(\lam+\I\eps) \|_{(a,x)}.
\eea
Using the definition of $\eps$ and (\ref{immb}) we obtain
\bea \nn
&& \|c(\lam) - m_b(\lam+\I\eps) s(\lam) \|_{(a,x)}^2 \le
4 \|u_b(\lam+\I\eps) \|_{(a,x)}^2\\ \nn
&& \qquad \le 4 \|u_b(\lam+\I\eps) \|_{(a,b)}^2 = \frac{4}{\eps} \im(m_b(\lam+\I\eps))\\
&& \qquad \le 8 \| s(\lam) \|_{(a,x)} \|c(\lam) \|_{(a,x)}
\im(m_b(\lam+\I\eps)).
\eea
Combining this estimate with
\begin{equation}
\|c(\lam) - m_b(\lam+\I\eps) s(\lam) \|_{(a,x)}^2 \ge
\Big(\|c(\lam)\|_{(a,x)} - |m_b(\lam+\I\eps)| \| s(\lam) \|_{(a,x)}\Big)^2
\end{equation}
shows $(1-t)^2 \le 8 t$, where $t= |m_b(\lam+\I \eps)| \| s(\lam) \|_{(a,x)}
\| c(\lam) \|_{(a,x)}^{-1}$.
\end{proof}

We now introduce the concept of subordinacy.
A nonzero solution $u$ of $\tau u = z u$ is called subordinate
at $b$ with respect to another solution $v$ if
\begin{equation} \label{defsubos}
\lim_{x\to b} \frac{\| u \|_{(a,x)}}{\| v \|_{(a,x)}} =0.
\end{equation}
It is easy to see that if $u$ is subordinate
with respect to $v$, then it is subordinate with respect to any linearly
independent solution. In particular, a subordinate solution is unique up
to a constant. Moreover, if a solution $u(\lam)$ of $\tau u = \lam u$, $\lam\in\R$,
is subordinate, then it is real up to a constant, since both the real and
the imaginary part are subordinate. For $z\in\C\backslash\R$ we know that there
is always a subordinate solution at $b$, namely $u_b(z,x)$.
The following result considers the case $z\in\R$.

\begin{lemma} \label{lemsuiffmpm}
Let $\lam\in\R$. There is a solution $u$ of $\tau u = \lam u$ that is subordinate at $b$  if
and only if $m_b(\lam+\I\eps)$ converges to a limit in $\R\cup\{\infty\}$ as
$\eps\downarrow 0$. Moreover,
\begin{equation}
\lim_{\eps\downarrow 0} m_b(\lam+\I\eps) =  \frac{\cos(\alpha) pu'(\lam,a) +
\sin(\alpha) u(\lam,a)}{\cos(\alpha) u(\lam,a) - \sin(\alpha) pu'(\lam,a)}
\end{equation}
in this case.
\end{lemma}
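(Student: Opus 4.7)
The proof will go in two directions, both resting on the Jitomirskaya--Last estimate (Lemma \ref{lemjlineq}) combined with the change-of-boundary-condition formula \eqref{eqwmab}.

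For the easier direction I would assume that $L := \lim_{\eps \downarrow 0} m_b(\lam + \I\eps)$ exists in $\R \cup \{\infty\}$ and produce a subordinate solution. If $L = \infty$ then Lemma \ref{lemjlineq} forces $\|s(\lam)\|_{(a,x)}/\|c(\lam)\|_{(a,x)} \to 0$ as $x \to b$, so $s(\lam, \cdot)$ itself is subordinate. If instead $L \in \R$, the same lemma keeps $\|c(\lam)\|_{(a,x)}/\|s(\lam)\|_{(a,x)}$ bounded, and I would revisit the proof of Lemma \ref{lemjlineq}: the intermediate estimate
$$
\|c(\lam) + m_b(\lam + \I\eps) s(\lam)\|_{(a,x)}^2 \le 8\, \im m_b(\lam + \I\eps)\, \|s(\lam)\|_{(a,x)} \|c(\lam)\|_{(a,x)},
$$
valid at $\eps = \eps_\lam(x)$, divided by $\|s(\lam)\|_{(a,x)}^2$ has right-hand side tending to zero because $\im m_b \to 0$ and $\|c\|/\|s\|$ is bounded. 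Combining with $|m_b - L| \to 0$ then gives $\|c(\lam) + L\, s(\lam)\|_{(a,x)}/\|s(\lam)\|_{(a,x)} \to 0$, so $c(\lam, \cdot) + L\, s(\lam, \cdot)$ is the subordinate solution.

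For the converse I would rotate the boundary condition. Given a subordinate solution $u$, pick the unique $\alpha_0 \in [0, \pi)$ with $\cos(\alpha_0) u(\lam, a) - \sin(\alpha_0) pu'(\lam, a) = 0$, which exists because $(u(\lam,a), pu'(\lam, a)) \ne (0, 0)$; then $u$ is, up to a scalar, the Dirichlet-type solution $s_{\alpha_0}$ for \eqref{eqbc} with parameter $\alpha_0$. Writing $c_{\alpha_0}$ for the complementary solution and applying Lemma \ref{lemjlineq} to this fundamental system yields
$$
|m_{b, \alpha_0}(\lam + \I\eps)|\, \frac{\|s_{\alpha_0}(\lam)\|_{(a,x)}}{\|c_{\alpha_0}(\lam)\|_{(a,x)}} \in [5 - \sqrt{24},\, 5 + \sqrt{24}],
$$
and subordinacy of $u \propto s_{\alpha_0}$ drives the ratio to zero, so $m_{b, \alpha_0}(\lam + \I\eps) \to \infty$ in $\C \cup \{\infty\}$. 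Plugging this into \eqref{eqwmab} with $\beta = \alpha_0$ and invoking continuity of M\"obius maps, I obtain $m_{b, \alpha}(\lam + \I\eps) \to -\cot(\alpha - \alpha_0)$, which unfolds via $\sin\alpha_0 \propto u(\lam, a)$, $\cos\alpha_0 \propto pu'(\lam, a)$ into the explicit formula in the statement; specialising to $\alpha = 0$ recovers the limit of $m_b$ itself.

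The main obstacle is that Lemma \ref{lemjlineq} only controls the modulus $|m_b|$, and in principle $m_b$ could rotate while $|m_b|$ converges. The rotation trick circumvents this: after moving the boundary condition to $\alpha_0$ so that the subordinate solution becomes the Dirichlet-type solution, the Jitomirskaya--Last bound forces $|m_{b,\alpha_0}| \to \infty$, which \emph{is} equivalent to convergence in $\C \cup \{\infty\}$, and the corresponding limit for $m_b$ falls out from the purely algebraic relation \eqref{eqwmab}.
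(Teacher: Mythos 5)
Your proposal is correct and follows essentially the same route as the paper: both directions rest on the Jitomirskaya--Last inequality (Lemma~\ref{lemjlineq}) combined with the M\"obius relation (\ref{eqwmab}), and your rotation of the boundary condition to $\alpha_0$ in the converse direction is exactly the paper's argument (where every solution is written as $s_\beta$ and subordinacy of $s_\beta$ is identified with $m_{b,\beta}(\lam+\I\eps)\to\infty$). The only minor deviation is in the ``finite limit $\Rightarrow$ subordinate solution'' direction, where you re-open the proof of Lemma~\ref{lemjlineq} to exhibit $c(\lam,\cdot)+L\,s(\lam,\cdot)$ directly, whereas the paper just observes that $L=-\cot(\alpha-\beta)$ for a unique $\beta\in[0,\pi)$ and applies the Jitomirskaya--Last bound at that rotated boundary condition to conclude $s_\beta$ is subordinate; both are valid and yield the same solution.
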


\begin{proof}
We will consider the number $\alpha$ fixing the boundary condition (\ref{eqbc}) as a parameter.
Denote by $s_\alpha(z,x)$, $c_\alpha(z,x)$ the solutions of $\tau u = z u$ corresponding to the
initial conditions $s_\alpha(z,a)= - \sin(\alpha)$, $ps_\alpha'(z,a) = \cos(\alpha)$,
$c_\alpha(z,a)= \cos(\alpha)$, $pc_\alpha'(z,a) = \sin(\alpha)$.

Let $L_\alpha$ be set of all $\lam\in\R$ for which the limit $\lim_{\eps\downarrow0} m_{b,\alpha}(\lam+\I\eps)$
exists (finite or infinite). Then (\ref{eqwmab}) implies
that $L_\alpha=L_\beta$. Hence $L\equiv L_\alpha$ is independent of $\alpha$. We set
$m_{b,\alpha}(\lam)=\lim_{\eps\downarrow0} m_{b,\alpha}(\lam+\I\eps)$ for $\lam\in L$.

Moreover, every solution can (up to a constant) be written as $s_\beta(\lam,x)$ for some
$\beta\in[0,\pi)$. But by Lemma~\ref{lemjlineq} $s_\beta(\lam,x)$ is subordinate if and only
if $\lim_{\eps\downarrow 0} m_{b,\beta}(\lam+\I\eps) =\infty$ and this is the case if and
only if
$$
m_{b,\alpha}(\lam) = \frac{\cos(\alpha-\beta) m_{b,\beta}(\lam) + \sin(\alpha-\beta)}{\cos(\alpha-\beta)
- \sin(\alpha-\beta) m_{b,\beta}(\lam)} = -\cot(\alpha-\beta)
$$
is a number in $\R\cup\{\infty\}$.
\end{proof}

We are interested in $N(\tau)$, the set of all $\lam\in\R$ for which no subordinate solution exists, that is,
\begin{equation} \label{defnspm}
N(\tau)= \{\lam\in\R | \mbox{No solution of $\tau u =\lam u$ is subordinate at $b$} \}.
\end{equation}

\begin{remark}
Since the set, for which the limit $\lim_{\eps\downarrow 0} m_b(\lam+\I\eps)$
does not exist (finite or infinite), is of zero spectral and Lebesgue measure,
changing the $\lim$ in (\ref{defsubos}) to a $\liminf$ will affect $N(\tau)$ only
on such a set (which is irrelevant for our purpose).
\end{remark}

Then, as consequence of the previous lemma, we have

\begin{theorem} \label{thmnpmacsp}
The set $N(\tau)$ is a minimal support for the absolutely continuous spectrum of $H$.
In particular,
\begin{equation}
\sig_{ac}(H)= \ol{N(\tau)}^{ess}.
\end{equation}
Moreover, the set
\be
\{ \lam |\, s(\lam,x) \text{ is subordinate at $b$} \}
\ee
is a minimal support for the singular spectrum.
\end{theorem}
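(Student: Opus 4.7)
The plan is to combine Lemma~\ref{lemsuiffmpm}, which characterizes the existence of a subordinate solution via the boundary behavior of $m_b(\lam+\I\eps)$, with the descriptions of $M_{ac}$ and $M_s$ as supports of $\sig_{ac}(H)$ and $\sig_s(H)$ recorded at the beginning of the section. The argument then reduces to a comparison of these sets, modulo Lebesgue-null exceptional sets coming from the boundary behavior of the Herglotz function $m_b$. The key input is Fatou's theorem for Herglotz functions, which guarantees that the boundary value $m_b(\lam):=\lim_{\eps\downarrow 0} m_b(\lam+\I\eps)$ exists finitely in $\ol{\C_+}$ for Lebesgue-almost every $\lam\in\R$.

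For the absolutely continuous statement, on this Lebesgue-full set one has $\lam\in M_{ac}$ iff $\im m_b(\lam)>0$, whereas Lemma~\ref{lemsuiffmpm} converts $\lam\notin N(\tau)$ into $m_b(\lam)\in\R\cup\{\infty\}$, which on the same set simplifies to $\im m_b(\lam)=0$. Hence $N(\tau)\triangle M_{ac}$ is Lebesgue-null; since both the essential closure operation and the notion of minimal support are insensitive to Lebesgue-null modifications, this yields $\sig_{ac}(H)=\ol{N(\tau)}^{ess}$ and that $N(\tau)$ is itself a minimal support.

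For the singular part, I would specialize Lemma~\ref{lemsuiffmpm} to $u=s(\lam,\cdot)$ at $\alpha=0$: from $s(\lam,a)=0$ and $ps'(\lam,a)=1$ the formula in that lemma collapses to $\lim_{\eps\downarrow 0} m_b(\lam+\I\eps)=\infty$, so $s(\lam)$ is subordinate at $b$ precisely when this boundary limit is infinite. The standard support $\{\lam\mid\lim\im m_b(\lam+\I\eps)=\infty\}$ of $\sig_s(H)$ is contained in $\{\lam\mid\lim m_b=\infty\}$ because $|m_b|\ge\im m_b$, so $\{\lam\mid s(\lam)\text{ is subordinate at }b\}$ is a support for $\sig_s$; minimality then follows because the set difference $\{\lim m_b=\infty\}\setminus\{\lim\im m_b=\infty\}\subseteq\{|\re m_b|\to\infty,\ \im m_b\text{ bounded}\}$ is Lebesgue-null by standard Boole-type bounds for Herglotz functions, while $\{\lim\im m_b=\infty\}$ itself is Lebesgue-null by Fatou. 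The main bookkeeping burden of the proof lies in isolating these null exceptional subsets and verifying that the essential closure and minimal-support notions are preserved under modification by them.
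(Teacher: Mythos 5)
Your proposal is correct and follows essentially the same route as the paper: both arguments use Lemma~\ref{lemsuiffmpm} to translate the (non)existence of subordinate solutions into the boundary behavior of $m_b$, then identify $N(\tau)$ with $M_{ac}$ (and the subordinacy set of $s(\lam,\cdot)$, via the $\alpha=0$ specialization, with $M_s$) up to sets that are null for both Lebesgue and spectral measure. The paper merely compresses your explicit Fatou-type bookkeeping into the remark preceding the theorem and its opening ``without loss of generality we may assume $m_b(\lam+\I 0)$ exists'' step, so the substance is identical.
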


\begin{proof}
Without loss of generality we may assume $m_b(\lam+\I 0)$ exists (finite or infinite).
But for those values of $\lam$ the cases $\im( m_b(\lam+\I 0) )\in \{0,\infty\}$ imply
$\lam\not\in N(\tau)$ by Lemma~\ref{lemsuiffmpm}. Thus
we have $0 < \im( m_b(\lam+\I 0) ) <\infty$ if and only if
$\lam\in N(\tau)$ and the first result follows since $M_{ac}$ is a minimal support for the
absolutely continuous spectrum.

On the other hand, $s(\lam,x)$ is subordinate if and only if $m_b(\lam+\I 0)=\infty$ and
the second result follows since $M_s$ is a minimal support for the singular spectrum.
\end{proof}

Note that if $(\lam_1,\lam_2) \subseteq N(\tau)$, then the spectrum of any self-adjoint
extension $H$ of $\tau$ is purely absolutely continuous in the interval $(\lam_1,\lam_2)$.

\begin{remark}
As in \cite{jl} one can also give supports for the $\alpha$ continuous spectrum of $H$,
that is, the part which is absolutely continuous with respect to $\alpha$-dimensional
Hausdorff measure.
\end{remark}

We will now prove a simple lemma which enables one to show the lack of subordinate solutions
by verifying certain solution estimates.

\begin{lemma} \label{lem:nosub1}
Let the coefficients of $\tau$ satisfy the basic assumptions given above, i.e. (\ref{eq:baseass}). 
Suppose that  for any solution $u$ of $\tau u = \lam u$ there exists a constant $C = C(u)$ for which
\be\label{eqrubup}
\limsup_{x\to b} \frac{1}{x} \int_a^x \frac{|pu'(y)|^2}{r(y)} \, dy \le C
\limsup_{x\to b} \frac{1}{x} \int_a^x |u(y)|^2 r(y) dy.
\ee
If, in addition, $x^{-1}\|u\|_{(a,x)}^2$ is bounded for every solution $u$ of $\tau u = \lam u$,
then $\lam \in N(\tau)$.
\end{lemma}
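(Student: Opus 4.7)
The plan is to argue by contradiction. Suppose $\lambda \notin N(\tau)$, so there is a subordinate solution $u$ of $\tau u = \lambda u$ at $b$. Pick any solution $v$ linearly independent of $u$; then the Wronskian
\[
W(u,v)(y) = u(y)\,p(y)v'(y) - v(y)\,p(y)u'(y)
\]
is a nonzero constant $w$, as one checks by differentiating and using the equation. Integrating over $(a,x)$ and applying Cauchy--Schwarz to each of the two resulting terms, with the splitting $|u\,pv'| = (|u|\sqrt{r})(|pv'|/\sqrt{r})$, gives the central estimate
\[
|w|(x-a) \;\le\; \|u\|_{(a,x)}\, D_v(x)^{1/2} \;+\; \|v\|_{(a,x)}\, D_u(x)^{1/2},
\]
where $D_\phi(x) = \int_a^x |p\phi'|^2/r\,dy$.

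The next step is to show the right-hand side is $o(x)$ while the left-hand side is of order $x$ (working under the natural interpretation $b=\infty$, so that $a/x \to 0$). The hypothesis that $x^{-1}\|v\|_{(a,x)}^2$ is bounded, combined with subordinacy of $u$, yields
\[
\frac{\|u\|_{(a,x)}^2}{x}
\;=\;
\Bigl(\frac{\|u\|_{(a,x)}}{\|v\|_{(a,x)}}\Bigr)^{2}\;\frac{\|v\|_{(a,x)}^2}{x}\;\longrightarrow\;0.
\]
Feeding this into (\ref{eqrubup}) applied to $u$ gives $\limsup_{x\to b} D_u(x)/x \le C(u)\cdot 0 = 0$, hence $D_u(x)/x \to 0$. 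Applying (\ref{eqrubup}) to $v$ shows $D_v(x)/x$ is merely bounded. Dividing the Wronskian estimate by $x$ and writing it as
\[
|w|\Bigl(1-\frac{a}{x}\Bigr)
\;\le\;
\frac{\|u\|_{(a,x)}}{\sqrt{x}}\sqrt{\frac{D_v(x)}{x}} \;+\;
\frac{\|v\|_{(a,x)}}{\sqrt{x}}\sqrt{\frac{D_u(x)}{x}},
\]
the right-hand side tends to $0$ (bounded times vanishing in each summand), while the left-hand side tends to $|w|>0$. This contradiction forces $\lambda\in N(\tau)$.

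I expect the main obstacle to be bookkeeping rather than conceptual: ensuring that the subordinate factor $\|u\|/\|v\| \to 0$ is paired with a genuinely bounded factor on each side so that both summands on the right really decay faster than $\sqrt{x}$. The hypothesis (\ref{eqrubup}) is precisely what allows the passage from the square-norm decay of $u$ to the decay of the derivative term $D_u(x)/x$; without it, the Cauchy--Schwarz bound would give only $\sqrt{x}$-type growth on the right, which would not conflict with the linear growth on the left. One should also take some care about the case when $b$ is finite, but the same argument goes through by replacing the normalization with a bounded reference quantity, the key point being that $D_u/x \to 0$ while $D_v/x$ remains bounded.
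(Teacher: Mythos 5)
Your proposal is correct and follows essentially the same route as the paper: assume a subordinate solution $u$ exists, pair it with a linearly independent $v$ of constant Wronskian, use boundedness of $x^{-1}\|v\|_{(a,x)}^2$ plus subordinacy to get $x^{-1}\|u\|_{(a,x)}^2\to 0$, feed this through (\ref{eqrubup}) to kill the derivative term $D_u(x)/x$, and derive a contradiction from the Cauchy--Schwarz estimate on $\frac{1}{x}\int_a^x W[u,v]$. The bookkeeping you flag (pairing each vanishing factor with a bounded one) is exactly how the paper closes the argument.
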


\begin{proof}
Without loss of generality, we prove this result in the case that $a=0$. 
Suppose, under the assumptions above, there were a subordinate solution $u$, and let $v$
be a second linearly independent solution with Wronskian
\begin{equation}
W[u_1,v_1](y) = u_1(y)pv_1'(y) - pu_1'(y) v_1(y) =1
\end{equation}
for all $y \in (0,b)$. Since $u$ is subordinate we know
$$
\lim_{x\to b} \frac{x^{-1}\|u\|_{(0,x)}^2}{x^{-1}\|v\|_{(0,x)}^2} =0
$$
and boundedness of $x^{-1}\|v\|_{(0,x)}^2$ even implies $x^{-1}\|u\|_{(0,x)}^2\to 0$. Moreover,
by (\ref{eqrubup}) we also have $x^{-1}\|r^{-1} pu'\|_{(0,x)}^2\to 0$. But then
$$
1 = \frac{1}{x} \int_0^x W[u,v](y) dy \le \frac{1}{x}\|u\|_{(0,x)} \|r^{-1} pv'\|_{(0,x)}
+ \frac{1}{x} \|r^{-1} pu'\|_{(0,x)} \|v\|_{(0,x)} \, \to 0
$$
gives the desired contradiction.
\end{proof}

The next result, which is a generalization of Simon \cite[Lem.~3.1]{si}, demonstrates an explicit estimate
on the derivative of a solution to $\tau u = \lambda u$ in terms of the local $L^2$ norm of that
solution. As a consequence, see Corollary~\ref{cor:derbd}, we are able to provide conditions
on the coefficients of $\tau$ which allow one to verify the assumption (\ref{eqrubup}) of
Lemma~\ref{lem:nosub1}.

For any interval $[x-1,x+1] \subset (a,b)$, define the quantities,
\begin{equation} \label{eq:P}
P(x) = \int_{x - 1/2}^{x+1/2} \frac{1}{p(y)} dy,
\end{equation}
and 
\begin{equation} \label{eq:rpm}
r_-(x) \, = \, \inf_{y \in [x-1,x+1]} r(y) \quad {\rm and} \quad  r_+(x) = \sup_{y \in [x-1,x+1]} r(y).
\end{equation}
We will assume that for each such $x$, we have $0 < r_-(x) \leq r_+(x) < \infty$, and
moreover, we set
\begin{equation} \label{eq:gammy}
\gamma(x) = \frac{r_+(x)}{r_-(x)}.
\end{equation} 

\begin{lemma} \label{lem:derbd}
Let the coefficients of $\tau$ satisfy the general assumptions (\ref{eq:baseass}), and 
let $u$ be a real-valued solution of $\tau u = \lam u$ on (a,b).  
For any interval $[x-1,x+1] \subset (a,b)$, suppose that $0 < r_-(x) \leq r_+(x) < \infty$,
where $r_{\pm}(x)$ are as defined in (\ref{eq:rpm}) above. Then, the bound
\begin{equation} \label{eq:squaredbd}
\frac{(pu')(x)^2}{r(x)} \, \leq \,  \, \gamma(x) \, \int_{x-1}^{x+1} \left[ \, \frac{2}{P(x) \, r(y)} \, + \, \left| \, \frac{q(y)}{r(y)} \, - \, \lambda \, \right| \,\right]^2 \, dy \, \cdot \, \int_{x-1}^{x+1} u^2(y) \, r(y) \, dy. 
\end{equation}
holds, where $P(x)$ and $\gamma(x)$ are defined by (\ref{eq:P}) and (\ref{eq:gammy}), respectively.
\end{lemma}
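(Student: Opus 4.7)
The plan is to bound $(pu')(x)$ pointwise by a weighted integral of $|u|$ over $[x-1,x+1]$, then apply the Cauchy--Schwarz inequality in $L^2((x-1,x+1),r\,dy)$. The starting point is the first-order identity
\[
(pu')(x) = (pu')(y) + \int_y^x (q(t) - \lam r(t))\, u(t)\,dt, \qquad y \in [x-1,x+1],
\]
obtained from integrating the ODE $(pu')' = (q - \lam r)u$.

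To eliminate the unwanted term $(pu')(y)$, I average this identity against $\phi(y)/p(y)$, where $\phi(y) = (1 - |y-x|)_+$ is the linear tent vanishing at $x \pm 1$. Setting $Q(x) := \int_{x-1}^{x+1}\phi(y)/p(y)\,dy$, the inequality $\phi \geq 1/2$ on $[x-1/2,x+1/2]$ gives $Q(x) \geq P(x)/2$. After integration the left side becomes $(pu')(x)\,Q(x)$, while the term containing $(pu')(y)/p(y) = u'(y)$ equals $\int \phi u'\,dy$; an integration by parts (with no boundary contributions since $\phi$ vanishes at $x \pm 1$) turns this into $-\int u \phi'\,dy$, and because $\phi'(y) = -\sgn(y-x)$, its absolute value is at most $\int_{x-1}^{x+1} |u(y)|\,dy$. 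For the remaining double integral I use the crude bound $|\int_y^x f\,dt| \leq \int_{x-1}^{x+1}|f|\,dt$ valid for $y \in [x-1,x+1]$, and factor out $\int \phi/p\,dy = Q(x)$. Dividing through by $Q(x) \geq P(x)/2$ yields the pointwise estimate
\[
|(pu')(x)| \leq \int_{x-1}^{x+1} \left[\frac{2}{P(x)} + |q(y) - \lam r(y)|\right] |u(y)|\,dy.
\]

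An application of the Cauchy--Schwarz inequality in $L^2(r\,dy)$ then gives
\[
|(pu')(x)|^2 \leq \int_{x-1}^{x+1} \frac{1}{r(y)} \left[\frac{2}{P(x)} + |q - \lam r|\right]^2 dy \cdot \int_{x-1}^{x+1} u^2 r\, dy,
\]
and factoring $r(y)$ out of the bracketed expression rewrites the first integrand as $r(y)\left[2/(P(x) r(y)) + |q/r - \lam|\right]^2$. Dividing by $r(x)$ and invoking $r(y)/r(x) \leq r_+(x)/r_-(x) = \gamma(x)$, valid for $y \in [x-1,x+1]$, produces the claimed inequality.

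The main technical point is the choice of auxiliary function $\phi$: it must vanish at $x \pm 1$ so that integration by parts eliminates $u'$ without producing uncontrolled pointwise values of $u$, yet be large enough on $[x-1/2,x+1/2]$ that $Q(x)$ can be compared with $P(x)$. The linear tent meets both requirements and yields the explicit factor of $2$ appearing in the statement; any other $\phi$ with these two properties would give the same estimate up to a constant.
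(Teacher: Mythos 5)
Your proof is correct, but it reaches the crucial pointwise estimate by a different device than the paper. Both arguments reduce the lemma to the same intermediate inequality
\[
|(pu')(x)| \;\le\; \frac{2}{P(x)}\int_{x-1}^{x+1}|u(y)|\,dy \;+\; \int_{x-1}^{x+1}\bigl|(pu')'(y)\bigr|\,dy ,
\]
after which the two proofs essentially coincide: insert $(pu')'=(q-\lambda r)u$, apply Cauchy--Schwarz in $L^2(r\,dy)$, and absorb $r(y)/r(x)\le \gamma(x)$ (the paper pulls out the factors $r_\pm$ before H\"older rather than after, an immaterial reordering). The paper, following Simon's Lemma~3.1, derives this inequality by integrating $(pf')'$ against the kernel $P_0(x)-P_0(y)$, where $P_0(y)=\int_0^y p^{-1}$ --- a Taylor-type identity in the variable $P_0$ --- which bounds $|(pf')(0)|$ by the symmetric difference quotient $|f(x)-f(-x)|/(P_0(x)-P_0(-x))$ plus $\int|(pf')'|$, and then averages over the endpoint $x\in[1/2,1]$ to convert the difference quotient into $\frac{2}{P(0)}\int_{-1}^1|f|$. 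You instead average the first-order identity $(pu')(x)=(pu')(y)+\int_y^x(pu')'\,dt$ against the tent weight $\phi(y)/p(y)$ and eliminate the $u'$ term by a single integration by parts, with $Q(x)\ge P(x)/2$ producing the same constant $2/P(x)$. Your route is somewhat more streamlined (one weighted average instead of an identity for each endpoint followed by an average over endpoints) and makes the origin of the factor $2/P(x)$ transparent. Two points you use implicitly and could state: the integration by parts against the Lipschitz function $\phi$ requires $u'=(pu')/p\in L^1_{loc}$, which holds because $pu'$ is continuous and $p^{-1}\in L^1_{loc}$; and, as in the paper's own formulation, $r(x)$ is only defined almost everywhere, so the pointwise conclusion should be read for a.e.\ $x$.
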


\begin{proof}
Without loss of generality, we take $a=-1$, and prove this result first at $x=0$. The general result
claimed in (\ref{eq:squaredbd}) will follow by translation.

To see this estimate for $x=0$, we introduce the function $P_0: (a,b) \to \mathbb{R}$ by setting
\begin{equation*} \label{eq:I}
P_0(x) \, = \, \int_0^x \frac{1}{p(t)} \, dt .
\end{equation*}
Let $f$ be any solution of $\tau u = \lam u$ on $[-1,1]$ and take any number $0 \leq x \leq 1$. Integration by parts yields
\begin{equation*}
\int_0^x (pf')'(y) \, \left[ P_0(x) \, - \, P_0(y) \right] \, dy \, = \, f(x) \, - \, f(0) \, - \, (pf')(0) \, P_0(x),  
\end{equation*} 
and similarly,
\begin{equation*}
\int_{-x}^0 (pf')'(y) \, \left[ P_0(-x) \, - \, P_0(y) \right] \, dy \, = \, f(0) \, - \, f(-x) \, + \,  (pf')(0) \, P_0(-x).  
\end{equation*} 
Rewriting things a bit, we find that
\begin{equation*}
\begin{split}
(pf')(0) \, \left[ P_0(x) \, - \, P_0(-x) \right] \, = \, f(x) \, &- \, f(-x) \, - \, \int_0^x (pf')'(y) \, \left[ P_0(x) \, - \, P_0(y) \right] \, dy \\
& + \, \int_{-x}^0 (pf')'(y) \, \left[ P_0(y) \, - \, P_0(-x) \right] \, dy,
\end{split}
\end{equation*}
and since $P_0(x) -P_0(y) \leq P_0(x) -P_0(-x)$ for every $0 \leq y \leq x$ (and similarly  $P_0(y) -P_0(-x) \leq P_0(x) -P_0(-x)$ for every $-x \leq y \leq 0$), it
is easy to see then 
\begin{equation*}
|(pf')(0)| \, \leq \, \frac{|f(x) - f(-x)|}{P_0(x)-P_0(-x)} \, + \, \int_{-x}^x \left| (pf')'(y) \right| \, dy.
\end{equation*}
Integrating the above from $1/2$ to $1$, we find the bound
\begin{equation} \label{eq:intbd}
|(pf')(0)| \, \leq \,  \frac{2}{P(0)} \int_{-1}^1 |f(y)| \, dy \, +  \, \int_{-1}^1\left| (pf')'(y) \right| \, dy,
\end{equation}
where $P(0) = P_0(1/2) -P_0(-1/2)$ as defined in (\ref{eq:P}) above.
Multiplying both sides of (\ref{eq:intbd}) by $r_+(0)^{-1/2}$, it is clear that
\begin{equation}
\begin{split}
\sqrt{ \gamma^{-1}(0)} \cdot \frac{|(pf')(0)|}{\sqrt{r(0)}} & \leq \, \frac{|(pf')(0)|}{\sqrt{r_+(0)}} \\
& \leq \,  \frac{2}{P(0)} \int_{-1}^1 \frac{|f(y)|}{\sqrt{r(y)}} \, dy 
 \, + \,  \int_{-1}^1\frac{| (pf')'(y) |}{\sqrt{r(y)}} \, dy.
\end{split}
\end{equation}

If we set $f=u$, where we now regard $u$ as a solution only over $[-1,1]$, we find that
\begin{equation}
\frac{|(pu')(0)|}{ \sqrt{r(0)}} \, \leq \,  \, \sqrt{\gamma(0)} \, \int_{-1}^1 \left[  \, \frac{2}{P(0) \, r(y)} \, + \, \left| \, \frac{q(y)}{r(y)} \, - \, \lambda \, \right| \, \right] \, |u(y)| \, \sqrt{r(y)} \, dy.
\end{equation}
An application of H\"older yields,
\begin{equation}
\frac{(pu')(0)^2}{r(0)} \, \leq \,  \, \gamma(0) \, \int_{-1}^1 \left[ \, \frac{2}{P(0) \, r(y)} \, + \, \left| \, \frac{q(y)}{r(y)} \, - \, \lambda \, \right| \,\right]^2 \, dy \, \cdot \, \int_{-1}^1 u^2(y) \, r(y) \, dy. 
\end{equation}
The result claimed in Lemma~\ref{lem:derbd} now follows by translation.
If $[x-1,x+1] \subset (a,b)$, take $f: [-1,1] \to \mathbb{R}$ to be given by 
$f(y) = u(x+y)$. (\ref{eq:squaredbd}) follows after similarly translating
the coefficients $p$, $q$, and $r$.
\end{proof}

\begin{corollary} \label{cor:derbd}
Let $(a,b) =(0, \infty)$. If the functions
\be \label{eq:boundme}
\gamma(x), \quad \frac{1}{P(x)^2} \, \int_{x-1}^{x+1} \frac{1}{r(y)^2} \, dy, \quad \mbox{and} \quad \int_{x-1}^{x+1} \left| \frac{q(y)}{r(y)}-\lam \right|^2 \,dy
\ee
are all bounded for $x>1$, then equation (\ref{eqrubup}) holds. In this case, if all solutions of $\tau u = \lambda u$ are bounded,
in the sense that $\sqrt{r} u$ is a bounded function, then $\lambda \in N(\tau)$. 
\end{corollary}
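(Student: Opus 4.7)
The plan is to apply Lemma \ref{lem:derbd} pointwise at every $x > 1$, integrate the resulting bound in $x$, and then verify the hypotheses of Lemma \ref{lem:nosub1}. The three boundedness assumptions in \eqref{eq:boundme} are exactly what is needed to make the $x$-dependent prefactor in \eqref{eq:squaredbd} uniformly bounded.

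First, I would fix a solution $u$ of $\tau u = \lambda u$ and assume without loss of generality that $u$ is real-valued (otherwise treat real and imaginary parts separately). For each $x > 1$, the interval $[x-1,x+1]$ lies in $(0,\infty)$, so Lemma \ref{lem:derbd} applies. Using the elementary bound $(A+B)^2 \le 2A^2 + 2B^2$ inside the integrand, I obtain
\begin{equation*}
\int_{x-1}^{x+1}\!\left[\frac{2}{P(x)\,r(y)}+\left|\frac{q(y)}{r(y)}-\lambda\right|\right]^{\!2}\! dy \;\le\; \frac{8}{P(x)^2}\int_{x-1}^{x+1}\!\frac{dy}{r(y)^2} + 2\int_{x-1}^{x+1}\!\left|\frac{q(y)}{r(y)}-\lambda\right|^{2}\! dy,
\end{equation*}
and by \eqref{eq:boundme} this quantity, together with $\gamma(x)$, is bounded by some constant $K_{0}$ uniformly in $x > 1$. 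Thus
\begin{equation*}
\frac{(pu')(x)^{2}}{r(x)} \;\le\; K_{0}\int_{x-1}^{x+1} u(y)^{2}\,r(y)\,dy \qquad (x>1).
\end{equation*}

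Next, I would integrate this estimate from $1$ to $X$ and swap the order of integration. Each $y \in (0,X+1)$ contributes to the $x$-integral on a set of length at most $2$, so
\begin{equation*}
\int_{1}^{X}\frac{(pu')(x)^{2}}{r(x)}\,dx \;\le\; 2K_{0}\int_{0}^{X+1} u(y)^{2}\,r(y)\,dy \;=\; 2K_{0}\,\|u\|_{(0,X+1)}^{2}.
\end{equation*}
Absorbing the harmless finite piece $\int_{0}^{1}(pu')^{2}/r\,dy$ (which, divided by $X$, tends to $0$), division by $X$ and $X \to \infty$ gives
\begin{equation*}
\limsup_{X\to\infty}\frac{1}{X}\int_{0}^{X}\frac{(pu')(y)^{2}}{r(y)}\,dy \;\le\; 2K_{0}\,\limsup_{X\to\infty}\frac{1}{X+1}\,\|u\|_{(0,X+1)}^{2},
\end{equation*}
which is exactly \eqref{eqrubup} with $C = 2K_0$.

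Finally, to deduce $\lambda \in N(\tau)$ under the additional hypothesis that $\sqrt{r}\,u$ is bounded, note that $|u(y)|^{2}r(y) \le M^{2}$ for some $M$ immediately yields $x^{-1}\|u\|_{(0,x)}^{2} \le M^{2}$, so the second hypothesis of Lemma \ref{lem:nosub1} is satisfied; combined with the just-established \eqref{eqrubup}, Lemma \ref{lem:nosub1} delivers $\lambda \in N(\tau)$. The only step requiring any care is the Fubini bookkeeping when passing from the pointwise bound to the $\limsup$ averages; everything else is a direct application of the preceding lemmas.
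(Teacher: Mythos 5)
Your proposal is correct and follows essentially the same route as the paper: apply Lemma~\ref{lem:derbd} on each window $[x-1,x+1]$, use the bounds \eqref{eq:boundme} (via $(A+B)^2\le 2A^2+2B^2$) to get a uniform constant, integrate in $x$, change the order of integration to compare with $\|u\|_{(0,X+1)}^2$, and then invoke Lemma~\ref{lem:nosub1} together with the trivial observation that boundedness of $\sqrt{r}\,u$ gives boundedness of $x^{-1}\|u\|_{(0,x)}^2$. The only differences are bookkeeping of constants, which is immaterial.
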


\begin{proof}
Clearly, if $\sqrt{r} u$ is a bounded function, then $ x^{-1} \| u \|_{(0,x)}$ is bounded. Thus, Lemma~\ref{lem:nosub1} can
be applied if we verify (\ref{eqrubup}).

Let $\gamma_+$, $P_+$ and $Q_+$ be the bounds on the quantities listed in (\ref{eq:boundme}) respectively.
It is easy to see that for any $t>1$,
\begin{equation} \label{eq:simpbd}
\gamma(t) \cdot \int_{t-1}^{t+1} \left[ \frac{1}{P(t)r(s)} + \left| \frac{q(s)}{r(s)} - \lambda \right| \right]^2 \, ds 
\leq 2 \gamma_+ (P_+ + Q_+).
\end{equation}
Therefore, for all $x>1$,
\begin{eqnarray}
\int_0^x \frac{|pu'(t)|^2}{r(t)} \, dt & = & \int_0^1 \frac{|pu'(t)|^2}{r(t)} \, dt + \int_1^x \frac{|pu'(t)|^2}{r(t)} \, dt \nonumber \\
& \leq & \int_0^1 \frac{|pu'(t)|^2}{r(t)} \, dt + 2 \gamma_+(P_+ + Q_+) \int_1^x  \int_{t-1}^{t+1} |u(s)|^2 r(s) \, ds \, dt,
\end{eqnarray}
where we used both Lemma~\ref{lem:derbd} and the bound (\ref{eq:simpbd}). Changing the order of integration, we see that
\begin{equation}
\int_1^x \int_{t-1}^{t+1}  |u(s)|^2 r(s) ds dt \, \leq \, 2 \,  \int_1^{x+1} |u(s)|^2 r(s) ds,
\end{equation}
and hence, (\ref{eqrubup}) holds.
\end{proof}

In order to prove there is no subordinate solution of $\tau u = \lam u$, Lemma~\ref{lem:nosub1} 
requires estimates of the derivative, i.e.  $\| r^{-1} pu' \|_{(a,x)}$, explicitly in terms of the 
solution $\| u \|_{(a,x)}$. If the potential $q$ satisfies a locally uniform $L^2$ estimate, then 
Lemma~\ref{lem:derbd} provides the desired bounds.  

If one weakens the assumptions on the potential $q$, then one may still prove that
bounded solutions imply no subordinate solutions. In this case, however, explicit derivative bounds 
are not readily available, and so Lemma~\ref{lem:nosub1} does not immediately apply.
Let $q$ be written in terms of it's positive and negative parts, i.e. as $q= q_+ - q_-$.
For $q$, we will assume that
\begin{equation} \label{eq:qass}
q_+ \in L^1_{{\rm loc}}((a,b)) \quad {\rm and} \quad \sup_{x \in (a,b)} \int_x^{x+1} \frac{q_-(t)}{r(t)} dt \, < \, \infty.
\end{equation}
In terms of the coefficient $p$, we will suppose that
\begin{equation} \label{eq:I-}
\inf_{x \in (a,b)} \int_x^{x+1} \frac{r(t)}{p(t)} dt  \, = \, I_- \, > \, 0.
\end{equation}
With $r$ we assume
\begin{equation} \label{eq:gamfine}
\gamma \, = \, \sup_{x \in (a,b)} \frac{r_+(x)}{r_-(x)} \, < \, \infty,
\end{equation}
where $r_{\pm}(x)$ are as defined in (\ref{eq:rpm}).

We will prove the following result.

\begin{lemma} \label{lem:bdnosub} 
Let the coefficients of $\tau$ satisfy the general assumptions of (\ref{eq:baseass}). Moreover,
suppose that $r$ is non-decreasing and each of (\ref{eq:qass}), (\ref{eq:I-}), and (\ref{eq:gamfine}) hold.  
In this case, if all solutions of $\tau u =  \lambda u$ are such that $\sqrt{r} u$ is
bounded on $(a,b)$, then $\lambda \in N(\tau)$.
\end{lemma}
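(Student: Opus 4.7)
Plan: The natural approach is to apply Lemma~\ref{lem:nosub1}. Writing $M := \sup \sqrt{r}\,|u|$, the bound $x^{-1}\|u\|_{(a,x)}^2 \leq M^2$ is immediate from the hypothesis, so the task reduces to verifying the derivative bound (\ref{eqrubup}), namely $\int_a^x |pu'|^2/r \, dy = O(x)$. Observe that Corollary~\ref{cor:derbd} cannot be invoked directly, since it needs a local $L^2$ control on $q/r - \lambda$, which is strictly stronger than the asymmetric hypothesis (\ref{eq:qass}).

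The starting identity comes from $\frac{d}{dy}(u \cdot pu') = p(u')^2 + (q - \lambda r)u^2$. Integrating from $a$ to $x$ and splitting $q = q_+ - q_-$ yields
\begin{equation*}
\int_a^x p(u')^2 \, dy \, + \, \int_a^x q_+ u^2 \, dy \, = \, u(x) pu'(x) \, - \, u(a) pu'(a) \, + \, \int_a^x q_- u^2 \, dy \, + \, \lambda \int_a^x r u^2 \, dy,
\end{equation*}
whose left-hand side, call it $S(x)$, is non-negative and non-decreasing. The last two integrals on the right are $O(x)$: since $|u|^2 r \leq M^2$, we have $\int r u^2 \leq M^2(x-a)$, and by (\ref{eq:qass}), $\int q_- u^2 \leq M^2 \int q_-/r \, dy = O(x)$. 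The identity then reads $S(x) = u(x) pu'(x) + O(x)$.

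The crux is controlling the boundary term. Bound $|u(x) pu'(x)| \leq M|pu'(x)|/\sqrt{r(x)}$ and then establish a local estimate for $(pu')^2/r$ in the spirit of Lemma~\ref{lem:derbd}. Under our weaker hypotheses, the assumptions (\ref{eq:I-}), (\ref{eq:gamfine}), and the monotonicity of $r$ allow one to replay the variation-of-constants argument and obtain a pointwise bound of the form
\begin{equation*}
\frac{(pu'(x))^2}{r(x)} \, \leq \, C_1 \bigl( S(x+1) - S(x-1) \bigr) \, + \, C_2,
\end{equation*}
in which the $q_+$-contribution is not estimated pointwise but rather absorbed into the local increment of $S$; the positivity of $q_+$ is essential here, since we have no uniform bound on $q_+$ itself. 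Substituting this pointwise bound into $S(x)^2 \leq M^2 (pu'(x))^2/r(x) + O(x)\cdot S(x) + O(x^2)$ leads, after a routine iteration, to $S(x) = O(x)$. An application of Fubini then gives
\begin{equation*}
\int_a^x \frac{|pu'(y)|^2}{r(y)} \, dy \, \leq \, 2 C_1 \, S(x+1) \, + \, C_2 (x-a) \, = \, O(x),
\end{equation*}
which is (\ref{eqrubup}). Lemma~\ref{lem:nosub1} then delivers $\lambda \in N(\tau)$.

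The main obstacle is the local pointwise estimate on $(pu')^2/r$, which is precisely the point at which the asymmetric hypothesis on $q$ forces a departure from the direct argument of Lemma~\ref{lem:derbd}. The positivity $q_+ \geq 0$ is used crucially and is not a convenience: it is the only mechanism by which the $q_+$-contribution can be accommodated without a uniform bound, namely by folding it into the non-negative quantity $S(x)$ on the left of the identity instead of estimating it pointwise in $L^2$.
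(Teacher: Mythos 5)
Your opening reduction and the Lagrange-identity bookkeeping (the function $S(x)=\int_a^x p(u')^2\,dy+\int_a^x q_+u^2\,dy$ and the bound $S(x)=u(x)pu'(x)+O(x)$) are fine, but the proof collapses at exactly the point you yourself identify as the crux. The local estimate $(pu'(x))^2/r(x)\leq C_1\bigl(S(x+1)-S(x-1)\bigr)+C_2$ is asserted, not proved, and the mechanism you invoke does not exist: if you ``replay the variation-of-constants argument'' (pick a point $y^*\in[x-1,x+1]$ where $(pu')^2/r$ is controlled by the local mean of $p(u')^2$ via (\ref{eq:I-}), then integrate $(pu')'=(q-\lambda r)u$ from $y^*$ to $x$), you are left with the term $\int_{x-1}^{x+1}q_+|u|\,ds$, and there is no way to ``absorb'' it into $\int q_+u^2\,ds$ plus a constant: Young's inequality and Cauchy--Schwarz both produce $\int_{x-1}^{x+1}q_+\,ds$, which under (\ref{eq:qass}) is only locally integrable and has no uniform bound, and a pointwise lower bound on $|u|$ is not available. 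This is precisely the obstruction the paper flags before stating the lemma: under (\ref{eq:qass}) ``explicit derivative bounds are not readily available, and so Lemma~\ref{lem:nosub1} does not immediately apply.'' The paper therefore abandons Lemma~\ref{lem:nosub1} altogether and argues qualitatively: Proposition~\ref{prop:bdubdu'} (a sign-based contradiction argument following Stolz, exploiting that a positive potential only pushes the derivative outward on the set where $u\geq 0$) shows bounded solutions have bounded derivatives; the Wronskian gives a pointwise lower bound on $\sqrt{r}|u|+|pu'|/\sqrt{r}$; Proposition~\ref{prop:l2bd} converts the two-sided pointwise bound into linear growth of $\|u\|_{(a,x)}^2$ from below; and comparing linear upper and lower bounds for arbitrary pairs of solutions rules out subordinacy.

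Even granting your local estimate, two further steps fail. First, the ``routine iteration'': the combined inequality $S(x)^2\leq C\bigl(S(x+1)-S(x-1)\bigr)+O(x)S(x)+O(x^2)$ bounds $S(x)^2$ by the \emph{forward increment} of $S$, which is consistent with arbitrarily fast growth (e.g.\ $S(x)=\exp(\exp x)$ satisfies it), and nothing in the hypotheses excludes superlinear growth of $S$ a priori, since neither $\int_x^{x+1}q_+/r\,ds$ nor $\int_x^{x+1}r/p\,ds$ is assumed bounded above; so $S(x)=O(x)$ cannot be extracted. Second, and independently, your reduction of (\ref{eqrubup}) to the absolute bound $\int_a^x|pu'|^2/r\,dy=O(x)$ is invalid in the only case that matters: (\ref{eqrubup}) is a \emph{relative} bound, and in the proof of Lemma~\ref{lem:nosub1} it is applied to the putative subordinate solution, for which $x^{-1}\|u\|_{(a,x)}^2\to 0$, in order to conclude $x^{-1}\|r^{-1}pu'\|_{(a,x)}^2\to 0$; an $O(x)$ bound on the left-hand side gives nothing there. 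Repairing this requires showing that every nontrivial solution satisfies a linear lower bound $\|u\|_{(a,x)}^2\geq C|x-a|$ --- which is exactly the content of Proposition~\ref{prop:l2bd} and is absent from your argument. In short, the proposal presupposes, in quantitative form, the derivative control and the linear lower bound that constitute the actual mathematical content of the lemma.
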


In the Schr\"odinger setting, i.e. in the case $p = r =1$, such a lemma was proven by Stolz in \cite[Lemma 4]{Stolz2}.
Our proof follows closely his ideas, but generalizes the setting to the context of Sturm--Liouville equations.
Lemma~\ref{lem:bdnosub} is an immediate consequence of  the following two propositions. 

\begin{proposition} \label{prop:bdubdu'}
Let the coefficients of $\tau$ satisfy the general assumptions of (\ref{eq:baseass}). Moreover,
suppose that $r$ is non-decreasing and each of (\ref{eq:qass}), (\ref{eq:I-}), and (\ref{eq:gamfine}) hold. 
Fix $\lambda \in \mathbb{R}$, and let $u$ be a solution of $\tau u = \lambda u$ on $(a,b)$.
If $\sqrt{r} u$ is bounded on $(a,b)$, then $\sqrt{r}^{-1}pu'$ is also bounded on $(a,b)$.
\end{proposition}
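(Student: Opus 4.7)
My plan is to establish a uniform upper bound on the auxiliary function $F(x) := (pu')(x)^2/r(x)$, organized in three steps.

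\emph{Step 1: Energy identity.} I would first integrate the Lagrange-type identity $[(pu')u]' = p(u')^2 + (q_+ - q_- - \lambda r)u^2$ over a unit interval $[x,x+1]$. Using $|(pu')u| \le K\sqrt{F}$ (which follows from $|\sqrt{r}\,u|\le K$) for the boundary terms, and the uniform bound on $\int q_-/r$ from (\ref{eq:qass}), this gives
$$\int_x^{x+1} p(u')^2\,ds + \int_x^{x+1} q_+ u^2\,ds \;\le\; K\bigl(\sqrt{F(x)} + \sqrt{F(x+1)}\bigr) + C_1,$$
with $C_1$ depending only on $K$, $|\lambda|$, and $\sup_x\int q_-/r$. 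Both summands on the left are non-negative; the only property of $q_+$ used is $q_+\ge 0$.

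\emph{Step 2: Minimum on a unit interval.} Writing $\int p(u')^2\,ds = \int F\cdot(r/p)\,ds$, invoking (\ref{eq:I-}) in the form $\int_x^{x+1} r/p\,ds \ge I_-$, and using (\ref{eq:gamfine}) to compare $r(s)$ to $r(x)$ throughout the interval, a Chebyshev-type bound yields
$$\min_{s \in [x,x+1]} F(s) \;\le\; C_2\bigl(\sqrt{F(x)} + \sqrt{F(x+1)}\bigr) + C_3.$$
Thus every unit interval contains a point $s_\star$ at which $F$ is controlled by the endpoint values of $F$.

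\emph{Step 3: Propagation.} From $(pu')' = (q-\lambda r)u$, the Lebesgue--Stieltjes differential of $F$ is
$$dF = \frac{2(pu')(q-\lambda r)u}{r}\,ds - \frac{(pu')^2}{r^2}\,dr,$$
and the hypothesis that $r$ is non-decreasing enters here decisively: $dr\ge 0$, so the second term is non-positive and can be discarded. Splitting $q = q_+ - q_-$ and using $|(pu')u|\le K\sqrt F$, the $q_-$ and $\lambda r$ contributions on the right yield a Bihari--Gronwall estimate on $\sqrt F$ via (\ref{eq:qass}). Handling the $q_+$ contribution is the main obstacle, since $q_+$ is only locally (and not uniformly locally) integrable, so a direct Gronwall estimate fails. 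The idea is to use Cauchy--Schwarz in the form
$$\int \frac{|(pu')u|}{r}\,q_+\,ds \;\le\; \Bigl(\int F\,\frac{q_+}{r}\,ds\Bigr)^{1/2}\Bigl(\int u^2\,q_+\,ds\Bigr)^{1/2},$$
controlling the second factor via Step 1 and absorbing the first into $\sup F$ by a bootstrap. Finally, arguing by contradiction --- suppose $F(x_n)\to\infty$; Step 2 produces $s_\star$ near each $x_n$ with $F(s_\star) = o(F(x_n))$, and Step 3 then forces $F(x_n)$ to remain close to $F(s_\star)$, contradicting $F(x_n)\to\infty$ --- yields the desired uniform boundedness of $\sqrt{r}^{-1}pu'$.
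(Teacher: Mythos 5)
Your Steps 1 and 2 are sound: the identity $[(pu')u]' = p(u')^2 + (q-\lambda r)u^2$, the bound $|(pu')u|\le K\sqrt{F}$, and (\ref{eq:qass}) indeed give
\begin{equation*}
\int_x^{x+1} p(u')^2\,ds \, + \, \int_x^{x+1} q_+u^2\,ds \;\le\; K\bigl(\sqrt{F(x)}+\sqrt{F(x+1)}\bigr) + C_1,
\end{equation*}
and (\ref{eq:I-}) then yields the infimum bound via Chebyshev. The genuine gap is in Step 3, exactly at the point you flag as the main obstacle: the Cauchy--Schwarz/bootstrap device does not close. Set $M=\sup_{[x,x+1]}F$ and $Q=\int_x^{x+1} q_+/r\,ds$. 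Estimating the first Cauchy--Schwarz factor by $\int F\,q_+/r \le MQ$ and the second by Step 1, propagation from the near-minimizer $s_\star$ gives
\begin{equation*}
M \;\le\; C\sqrt{M} + C + 2\sqrt{MQ}\,\bigl(2K\sqrt{M}+C_1\bigr)^{1/2},
\end{equation*}
which for large $M$ reads $M \le C'\sqrt{Q}\,M^{3/4}$, i.e.\ only $M\le C''Q^2$. But (\ref{eq:qass}) imposes nothing on $q_+$ beyond local integrability, so $Q(x)$ may blow up as $x\to b$; hence no uniform bound on $F$ follows, and your final contradiction argument fails because the propagation error is of size $\sqrt{Q(x_n)}\,F(x_n)^{3/4}$, not $o(F(x_n))$. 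The failure is structural rather than a matter of sharper exponents: the first line of Step 3 discards the sign of $u$ by replacing $(pu')u\,q_+$ with $|pu'|\,|u|\,q_+$, while the proposition is genuinely sign-dependent --- an arbitrarily large $q_+$-spike can impart an arbitrarily large derivative to a bounded $u$ in an arbitrarily short interval, and the only thing forbidding this is that such a kick has the \emph{same sign} as $u$ and would therefore drive $u$ itself out of every bound.

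That one-sided mechanism is precisely the paper's proof (following Stolz \cite[Lemma 4]{Stolz2}). One argues by contradiction separately on $S_+=\{u\ge 0\}$ and $S_-=\{u\le 0\}$: if $pu'(\xi_n)\ge n\sqrt{r(\xi_n)}$ with $\xi_n\in S_+$, then writing $pu'(t)=pu'(\xi_n)+\int_{\xi_n}^t(q-\lambda r)u\,ds$ and using $u\ge 0$ there, the $q_+$ contribution is simply dropped (it enters with a favorable sign), and only $\int(q/r-\lambda)_-\,ds$ --- which (\ref{eq:qass}) controls uniformly --- must be estimated; consequently $pu'$ stays of order $n$ on an interval of length proportional to $n$, forcing $\sqrt{r}\,u$ to exceed any prescribed bound, contradicting the hypothesis. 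Note that $q_+$ is never estimated at all in that argument. Any repair of your scheme would have to import this sign information (for instance, running your Step 3 as a signed differential inequality on $\{u\ge 0\}$, where the $q_+$ term pushes $F$ monotonically and cannot be cancelled); as written, your proof does not establish the proposition.
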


\begin{proposition} \label{prop:l2bd}
Let the coefficients of $\tau$ satisfy the general assumptions of (\ref{eq:baseass}). Moreover,
suppose that both (\ref{eq:I-}) and (\ref{eq:gamfine}) hold. 
Fix $\lambda \in \mathbb{R}$, and let $u$ be a solution of $\tau u = \lambda u$ on $(a,b)$ for which
there exist constants $C_1 =C_1(u)$ and $C_2 =C_2(u)$ such that
\begin{equation} \label{eq:uu'bd}
0 < C_1 \leq \sqrt{r(x)} |u(x)| + \frac{|pu'(x)|}{\sqrt{r(x)}} \leq C_2 < \infty,
\end{equation}
for all $x \in (a,b)$. Then, there exists a constant $C_3 = C_3(u)$ and
an $x_0 \in (a,b)$ for which 
\begin{equation} \label{eq:l2bd}
\int_a^x |u(t)|^2 r(t) dt \geq C_3 |x -a|,
\end{equation}
for all $b \geq x \geq x_0.$
\end{proposition}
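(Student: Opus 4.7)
My plan is to argue by contradiction: assume the conclusion (\ref{eq:l2bd}) fails, so that $(x_n-a)^{-1}\int_a^{x_n}ru^2\,dt\to 0$ along some sequence $x_n\to b$, and derive a contradiction with the uniform lower bound $C_1$ in (\ref{eq:uu'bd}).

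First I would establish a pointwise energy estimate. Squaring $\sqrt{r}|u|+|pu'|/\sqrt{r}\geq C_1$ and using the AM-GM inequality $2AB\leq A^2+B^2$ yields
\[
r(t)u(t)^2+\frac{(pu'(t))^2}{r(t)}\;\geq\;\tfrac{1}{2} C_1^2\qquad\text{for all }t\in(a,b).
\]
Integrating over $(a,x)$ gives $\int_a^x ru^2\,dt+\int_a^x (pu')^2/r\,dt\geq \tfrac{1}{2} C_1^2(x-a)$. Under the contradiction hypothesis the first term is $o(x_n-a)$, so the derivative energy must carry all of the linear growth: $\int_a^{x_n}(pu')^2/r\,dt\geq(\tfrac{1}{2} C_1^2-o(1))(x_n-a)$.

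The crux is to convert this lower bound on the derivative energy into a lower bound on $\int_a^x ru^2$, contradicting the assumption. The natural tool is the identity $(u\,pu')'=(pu')^2/p+(q-\lambda r)u^2$ coming from the ODE $(pu')'=(q-\lambda r)u$, which integrates to
\[
\int_a^x\frac{(pu')^2}{p}\,dt=u(pu')\bigr|_a^x+\lambda\int_a^x ru^2\,dt-\int_a^x qu^2\,dt,
\]
with boundary term of absolute value at most $2C_2^2$, since $|u\,pu'|=(\sqrt{r}|u|)(|pu'|/\sqrt{r})\leq C_2^2$. Hypothesis (\ref{eq:I-}), the uniform positivity of $\int r/p$ over unit intervals, lets one pass between the weights $1/p$ and $1/r$ via $(pu')^2/p=((pu')^2/r)(r/p)$, while (\ref{eq:gamfine}) controls oscillation of $r$ at unit scale.

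The main obstacle is twofold: controlling the cross-term $\int qu^2$ using only $q\in L^1_{\mathrm{loc}}$, and carrying out the weight transfer without losing control at points where $r/p$ concentrates. For the first, I would exploit the geometric observation that every zero of $pu'$ lies in the set where $\sqrt{r}|u|\geq C_1$, so turning points of $u$ occur precisely where $u$ is large; between consecutive turning points $u$ is monotonic, which together with the uniform bounds $\sqrt{r}|u|,|pu'|/\sqrt{r}\leq C_2$ bounds the oscillation of $u$ and hence $\bigl|\int qu^2\,dt\bigr|$ in terms of a small constant times $\int ru^2\,dt$ plus a bounded term. For the second, I would apply Chebyshev's inequality to $\int(pu')^2/r$ to identify a set $S\subset(a,x_n)$ of relative measure tending to $1$ on which $(pu')^2/r$ is bounded below by a positive constant, and then use hypothesis (\ref{eq:I-}) to show that the bulk of $\int_a^{x_n}r/p$ is captured by $S$. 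Combining these ingredients produces the desired contradiction, yielding $\int_a^x ru^2\,dt\geq C_3(x-a)$ for $x\geq x_0$.
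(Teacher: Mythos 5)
Your opening step is sound: squaring (\ref{eq:uu'bd}) gives the pointwise bound $r u^2 + (pu')^2/r \geq C_1^2/2$, hence linear growth of the total energy, and under your contradiction hypothesis the derivative term must carry that growth. But the two steps you yourself flag as the main obstacles are genuine gaps, and neither proposed fix works under the stated hypotheses. First, the $q$-term. Proposition \ref{prop:l2bd} assumes only (\ref{eq:baseass}), so $q$ is merely $L^1_{\rm loc}$; the uniform local bound (\ref{eq:qass}) on $q_-/r$ is a hypothesis of Proposition \ref{prop:bdubdu'} and Lemma \ref{lem:bdnosub}, \emph{not} of this proposition, so you may not use it. Monotonicity of $u$ between turning points plus boundedness gives no control of $\int q u^2$: over one monotonicity interval this integral is comparable to $u^2 \int q$, and $\int_x^{x+1} \abs{q}/r$ may be unbounded in $x$. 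Worse, the attempt is circular: integrating by parts between consecutive turning points $c<d$ (where $pu'$ vanishes) gives exactly
\begin{equation*}
\int_c^d q u^2\, dt \;=\; \lambda \int_c^d r u^2\, dt \;-\; \int_c^d \frac{(pu')^2}{p}\, dt,
\end{equation*}
so ``controlling the cross-term'' is not a sub-task of bounding $\int (pu')^2/p$ --- it \emph{is} that task, reshuffled. Second, the weight transfer. Chebyshev does give a set $S \subset (a,x_n)$ of relative Lebesgue measure $1-o(1)$ on which $(pu')^2/r \geq C_1^2/4$, but smallness of the Lebesgue measure of the complement says nothing about its $r/p$-mass: (\ref{eq:I-}) is only a \emph{lower} bound on $\int_x^{x+1} r/p$, and (\ref{eq:gamfine}) constrains the oscillation of $r$, not of $p$. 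Nothing prevents essentially all of the $r/p$-mass of each unit interval from concentrating on the exceptional set where $u$ is large, in which case $\int_S r/p = o(x_n-a)$ and your contradiction evaporates. Excluding that scenario requires using the equation to propagate largeness of $u$ to a neighborhood --- i.e., precisely the local argument your global strategy was meant to avoid.

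This is the structural point you missed, and it is how the paper proceeds: its proof never integrates the second-order equation and never sees $q$ at all. It uses only the first-order relation $u' = (r/p)\,(pu'/\sqrt{r})\,(1/\sqrt{r})$ together with the two-sided bound (\ref{eq:uu'bd}). Concretely, fixing $\alpha < I_-/(I_-+\sqrt{\gamma})$, one shows every interval $[x-1,x+1]$ contains a point $x_0$ with $\sqrt{r(x_0)}\abs{u(x_0)} \geq \alpha C_1$ (otherwise $pu'$ would be strictly signed and of definite size there, and integrating $u'$ across the interval, using (\ref{eq:I-}) and (\ref{eq:gamfine}), contradicts the assumed smallness of $\sqrt{r}\,\abs{u}$); the upper bound $C_2$ then shows $\sqrt{r}\,\abs{u} \geq \alpha C_1/2$ persists on a neighborhood of $x_0$ measured by $\int r/p$, producing the per-interval contribution to $\int r u^2$ that yields (\ref{eq:l2bd}). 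Any repair of your argument would in effect have to reprove this local propagation step, after which the Lagrange identity and the Chebyshev decomposition become superfluous.
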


Proposition~\ref{prop:bdubdu'} proves that if a solution is bounded, then so is it's derivative. There is no explicit estimate of the derivative in terms
of the bounded solution, however. Proposition~\ref{prop:l2bd} demonstrates that if the sum of the solution and it's 
derivative are bounded from above and below, then the $L^2$-norm of the solution grows linearly.

Given Proposition~\ref{prop:bdubdu'} and Proposition~\ref{prop:l2bd}, it is easy to prove Lemma~\ref{lem:bdnosub}

\begin{proof}(of Lemma~\ref{lem:bdnosub}) 
To prove that there are no subordinate solutions, we will show that for any two solutions
$u_1$ and $u_2$ of $\tau u  = \lambda u$ there exists a constant $C = C(u_1,u_2)$ for which
\begin{equation}
\lim_{x \to b} \frac{ \int_a^x |u_1(t)|^2 r(t) dt }{ \int_a^x|u_2(t)|^2 r(t)dt}  \geq C > 0.
\end{equation}
By assumption, $\sqrt{r}u_2$ is bounded on $(a,b)$, and therefore it is clear that
\begin{equation}
\int_a^x |u_2(t)|^2 r(t) dt \leq C(u_2) |x-a|,
\end{equation}
for all $x \in (a,b)$. We need only find a matching lower bound for $u_1$.

We have assumed that for all solutions $u$ of $\tau u  = \lambda u$, the function $\sqrt{r}u$ is bounded. 
By Proposition~\ref{prop:bdubdu'}, the same is then true for $\sqrt{r}^{-1}pu'$. Thus, for $u_1$ the upper bound
\begin{equation}
\sqrt{r(x)} |u_1(x)| + \frac{|pu_1'(x)|}{\sqrt{r(x)}} \leq C(u_1),
\end{equation}
follows from Proposition~\ref{prop:bdubdu'}. It is also easy to derive a lower bound. Let
$v_1$ be the solution of $\tau u = \lambda u$ for which the Wronskian of $u_1$ and $v_1$ is $1$.
Then for all $x \in (a,b)$,
\begin{equation}
1 = W[u_1,v_1](x) = u_1(x)pv_1'(x) - pu_1'(x) v_1(x),
\end{equation}
and thus
\begin{eqnarray}
1 & \leq & \sqrt{r(x)}|u_1(x)| \cdot \frac{|pv_1'(x)|}{\sqrt{r(x)}} + \frac{|pu_1'(x)|}{\sqrt{r(x)}} \cdot \sqrt{r(x)}|v_1(x)| \nonumber \\
& \leq & C_2(v_1) \, \left( \sqrt{r(x)} |u_1(x)| + \frac{|pu_1'(x)|}{\sqrt{r(x)}} \right),
\end{eqnarray}
yields the desired, pointwise, lower bound. $C_2(v_1)$ is just the maximum of the bound on $\sqrt{r} |v_1|$ and $\sqrt{r}^{-1} |pv_1'|$.
The lower bound on the $L^2$ norm now follows from Proposition~\ref{prop:l2bd}.
We have proven Lemma~\ref{lem:bdnosub}
\end{proof}

We remark that, as was pointed out in \cite{Stolz2}, the above proof of Lemma~\ref{lem:bdnosub} demonstrates that boundedness of solutions implies that
$\tau$ is limit point at $b$; since the norm of all solutions grows linearly.

\begin{proof}(of Proposition~\ref{prop:bdubdu'}) 
Since the coefficients (and $\lambda$) are real, it is sufficient to prove this result for real valued solutions $u$.
It is also technically advantageous to extend the solution $u$ considered in the statement of Proposition~\ref{prop:bdubdu'}
to a function which is bounded on $\mathbb{R}$. To do so, we extend the differential expression $\tau$ to $\tilde{\tau}$
defined on all of $\mathbb{R}$ by
setting $r = p = 1$ for all $x \in \mathbb{R} \setminus (a,b)$ and
for such values of $x$ take $q(x)=q_0$,  a constant for which $\lambda - q_0 >0$. 
In this case, the continuation of $u$
beyond $(a,b)$, i.e. the solution $\tilde{u}$ of $\tilde{\tau} u =  \lambda u$ which equals $u$ on $(a,b)$, is bounded; as is it's derivative.
 
Denote by $S_+ = \{ x \in \mathbb{R} : u(x) \geq 0 \}$ and similarly $S_- = \{ x \in \mathbb{R}: u(x) \leq 0\}$.  
We will prove that $\sqrt{r}^{-1}pu'$ is bounded on $S_+$; boundedness on $S_-$ will then follow by considering the
solution $v = -u$.

The proof that $\sqrt{r}^{-1}pu'$ is bounded on $S_+$ goes via contradiction. If $\sqrt{r}^{-1}pu'$ is unbounded on $S_+$, then
either:
\newline i) For every $n \in \mathbb{N}$, there exists $\xi_n \in S_+$ with $pu'(\xi_n) \geq n \sqrt{r(\xi_n)}$,
\newline \indent or
\newline ii) For every $n \in \mathbb{N}$, there exists $\xi_n \in S_+$ with $pu'(\xi_n) \leq -n \sqrt{r(\xi_n)}$.

By reflection, i.e. considering $v(x) = u(-x)$ for similarly reflected coefficients, the case of ii) will
follow from i). Assume i).

Let $\xi_n$ be as in the statement of i) and take $x$ such that $[ \xi_n, x] \subset S_+$.
It is easy to see then that
\begin{eqnarray}
u(x) - u(\xi_n) & = & \int_{\xi_n}^x \frac{r(t)}{p(t)} \, \frac{pu'(t)}{\sqrt{r(t)}} \, \frac{1}{\sqrt{r(t)}} \, dt \nonumber \\
& = &  \int_{\xi_n}^x \frac{r(t)}{p(t)} \, \left[ \frac{pu'(\xi_n) \, + \, \int_{\xi_n}^t (pu')'(s) \, ds}{\sqrt{r(t)}} \right] \, \frac{1}{\sqrt{r(t)}} \, dt  \nonumber \\
& = &  \int_{\xi_n}^x \frac{r(t)}{p(t)} \, \left[ \frac{pu'(\xi_n) \, + \, \int_{\xi_n}^t \left(q(s) - \lambda r(s) \right) u(s) \, ds}{\sqrt{r(t)}} \right] \, \frac{1}{\sqrt{r(t)}} \, dt. 
\end{eqnarray}
We will focus on the quantity in the brackets above.
Clearly,
\begin{equation}
\frac{pu'(\xi_n)}{\sqrt{r(t)}} \geq \frac{n \sqrt{r( \xi_n)}}{\sqrt{r(t)}} \geq n,
\end{equation}
since $r$ is increasing. Moreover, we also have that
\begin{eqnarray}
\int_{\xi_n}^t \left(q(s) - \lambda r(s) \right) u(s) \, ds & \geq & - \int_{\xi_n}^t \sqrt{r(s)} \, \left( \frac{q(s)}{r(s)} \, - \, \lambda \right)_- \, \sqrt{r(s)}u(s) \, ds \nonumber \\
& \geq & - C(u) \, \sqrt{r(t)} \, \int_{\xi_n}^t \left( \frac{q(s)}{r(s)} \, - \, \lambda \right)_-  \, ds \nonumber \\
& \geq & - C \, \sqrt{r(t)} \, ( x \, - \, \xi_n \, + \, 1), 
\end{eqnarray}
for all $t \leq x$. Here we have used that the negative part of $q/r$ is locally, uniformly integrable.
Thus,
\begin{equation}
 \frac{pu'(\xi_n) \, + \, \int_{\xi_n}^t \left(q(s) - \lambda r(s) \right) u(s) \, ds}{\sqrt{r(t)}}  \geq n - C(x - \xi_n +1).
\end{equation}
For $n$ sufficiently large, $(n > 2C)$, the choice $x = \xi_n + \alpha n$ with $0 \leq \alpha \leq 1/(2C)$ guarantees that
$[ \xi_n, x] \subset S_+$. By taking $2 C \alpha = 1$, we have demonstrated that
\begin{equation}
u(x) - u(\xi_n)  \geq  \left( \frac{n}{2} - C \right) \, \int_{\xi_n}^x \frac{r(t)}{p(t)} \, \frac{1}{\sqrt{r(t)}} \, dt,
\end{equation} 
and therefore the bound
\begin{equation}
\sqrt{r(x)}u(x) \geq \left( \frac{n}{2} - C \right) \int_{\xi_n}^x \frac{r(t)}{p(t)} \, dt,
\end{equation}
which contradicts the boundedness assumption on $\sqrt{r}u$. We have proven Proposition~\ref{prop:bdubdu'}.
\end{proof}

\begin{proof}(of Proposition~\ref{prop:l2bd})
Let $u$ be a solution of $\tau u =  \lambda u$ for which  (\ref{eq:uu'bd}) holds, 
and fix some $\alpha$ satisfying
\begin{equation} \label{eq:abd}
0 \, < \, \alpha \, < \,  \frac{I_-}{I_- \, + \, \sqrt{\gamma}}  \, \leq \, 1.
\end{equation} 
We claim that for any interval $[x-1,x+1] \subset (a,b)$,
there exists an $x_0 =x_0( \alpha) \in [x-1,x+1]$ for which
\begin{equation} \label{eq:unsmall}
\sqrt{r(x_0)} \, |u(x_0)| \, \geq \, \alpha \, C_1.
\end{equation}
Suppose that this is not the case. Then, let $[x-1,x+1] \subset (a,b)$ denote an interval for which
\begin{equation} \label{eq:usmall}
\sqrt{r(t)} |u(t)| \, < \, \alpha C_1,
\end{equation}
for all $t \in [x-1,x+1]$. Inserting (\ref{eq:usmall}) into (\ref{eq:uu'bd}), we find that
\begin{equation}
\frac{|pu'(t)|}{ \sqrt{r(t)}} \, \geq \, C_1 \, - \, \sqrt{r(t)} \, |u(t)| \, > \, (1- \alpha) \, C_1 \, > \, 0, 
\end{equation}
i.e., the derivative is strictly signed. Clearly then,
\begin{eqnarray}
\frac{ \alpha C_1}{ \sqrt{r(x+1)}} \, + \, \frac{ \alpha C_1}{ \sqrt{r(x-1)}} & > & | u(x+1) \, - \, u(x-1) | \nonumber \\
& = & \left| \int_{x-1}^{x+1} \frac{ r(t)}{p(t)} \, \frac{pu'(t)}{ \sqrt{r(t)}} \, \frac{1}{\sqrt{r(t)}} \, dt \, \right| \nonumber \\
& = &  \int_{x-1}^{x+1} \frac{r(t)}{p(t)} \, \frac{|pu'(t)|}{ \sqrt{r(t)}} \, \frac{1}{\sqrt{r(t)}} \, dt  \nonumber \\
& \geq &  C_1\, (1 - \alpha) \, \frac{1}{ \sqrt{r_+(x)}} \, \int_{x-1}^{x+1} \frac{r(t)}{p(t)} \, dt. 
\end{eqnarray}
This bound implies that
\begin{equation}
\frac{2 \alpha C_1}{ \sqrt{r_-(x)}} \, > \, C_1 (1 - \alpha) \frac{1}{\sqrt{r_+(x)}} 2 I_-, 
\end{equation}
and a short calculation reveals that this contradicts the range of $\alpha$ assumed in (\ref{eq:abd}).
We have proven (\ref{eq:unsmall}).

Additionally, for any $t \in [x-1,x+1]$ we may estimate,
\begin{eqnarray}
| u(t) \, - \, u(x_0) | & = & \left| \int_{x_0}^t \frac{r(s)}{p(s)} \, \frac{pu'(s)}{ \sqrt{r(s)}} \, \frac{1}{\sqrt{r(s)}} \, ds \right| \nonumber \\
& \leq & C_2 \, \frac{1}{ \sqrt{r_-(x)}} \, \int_{ \min(x_0,t)}^{\max(x_0,t)} \frac{r(s)}{p(s)} \, ds,
\end{eqnarray}
from which it is clear that
\begin{equation}
| \sqrt{r(t)}u(t) \, - \, \sqrt{r(t)}u(x_0) | \, \leq \, C_2 \, \sqrt{ \frac{r_+(x)}{r_-(x)}} \, \int_{\min(x_0,t)}^{\max(x_0,t)} \frac{r(s)}{p(s)} \, ds.
\end{equation}
Thus, given any $t \in [x-1,x+1]$ for which
\begin{equation}
\int_{\min(x_0,t)}^{\max(x_0,t)} \frac{r(s)}{p(s)} \, ds \, \leq  \, \frac{ \alpha}{2} \, \frac{r_-(x)}{r_+(x)} \, \frac{C_1}{C_2},
\end{equation}
it then follows that
\begin{eqnarray}
\sqrt{r(t)} u(t) & = & \sqrt{r(t)} u(x_0) + \sqrt{r(t)}u(t) - \sqrt{r(t)}u(x_0) \nonumber \\
& \geq & \sqrt{\frac{r(t)}{r(x_0)}} \sqrt{r(x_0)} u(x_0) - \left| \sqrt{r(t)}u(t) - \sqrt{r(t)}u(x_0) \right| \nonumber \\
& \geq & \sqrt{\frac{r_-(x)}{r_+(x)}} \alpha C_1 - C_2 \sqrt{\frac{r_+(x)}{r_-(x)}} \int_{\min(x_0,t)}^{\max(x_0,t)} \frac{r(s)}{p(s)} \, ds \nonumber \\
& \geq & \frac{\alpha}{2} C_1.
\end{eqnarray}
{F}rom this bound, the estimate (\ref{eq:l2bd}) easily follows, and we have proven Proposition~\ref{prop:l2bd}.
\end{proof}

\section{A Weidmann-type result for Sturm--Liouville operators}

As a first application we show how to obtain a generalization of a well-known result from
Weidmann \cite{wd} to the case of Sturm--Liouville operators. In fact, this is a simple generalization,
to the context of Sturm--Liouville equations, of \cite[Theorem 6]{Stolz2}.
For this theorem, we assume that the coefficients of $\tau$ are asymptotically Schr\"odinger like.
Specifically, we take $(a,b) = (0, \infty)$ and, in addition to the general assumptions provided in
(\ref{eq:baseass}), we assume that there exists a constant $c>0$ for which
\begin{equation} \label{eq:asysch}
1 - \frac{1}{p} \in L^1((c, \infty)) \quad {\rm and} \quad 1 - r \in L^1((c, \infty)).
\end{equation} 
Moreover, we assume that the potential $q= q_1 + q_2$ with
\begin{equation} \label{eq:qdec}
\begin{split}
& \hspace{2.5cm} q_1 \in L^1((c, \infty)) \\
& q_2 \mbox{ is } AC, \quad  q_2' \in L^1((c, \infty)), \quad {\rm and} \quad q_2(t) \to 0 \text{ as } t \to \infty.
\end{split} 
\end{equation} 
In this case, the following result holds.

\begin{lemma}
Let the coefficients of $\tau$ satisfy the general assumptions (\ref{eq:baseass}) and
also both (\ref{eq:asysch}) and (\ref{eq:qdec}). Then, for any solutions of $\tau u =  \lambda u$,
with $\lambda >0$, both $u$ and $pu'$ are bounded on $(c, \infty)$. 
\end{lemma}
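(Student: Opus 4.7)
Since $p$, $q$, $r$, and $\lambda$ are real, I may assume $u$ is real-valued. Set $k=\sqrt{\lambda}>0$ and introduce the modified Pr\"ufer variables
\begin{equation*}
u(x) = \rho(x)\sin\theta(x), \qquad p(x)u'(x) = k\rho(x)\cos\theta(x),
\end{equation*}
with $\rho(x)>0$. Because $\rho(x)^{2}=u(x)^{2}+(pu'(x))^{2}/\lambda$, the desired conclusion is equivalent to the boundedness of $\rho$ on $(c,\infty)$. A direct computation using $(pu')'=(q-\lambda r)u$ yields
\begin{equation*}
(\log\rho)'(x) = \frac{\sin(2\theta(x))}{2k}\Bigl[\lambda\bigl(p(x)^{-1}-r(x)\bigr)+q(x)\Bigr],
\end{equation*}
together with
\begin{equation*}
\theta'(x) = \frac{k}{p(x)}\cos^{2}\theta(x) + \frac{\lambda r(x)-q(x)}{k}\sin^{2}\theta(x).
\end{equation*}

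Next, I split $q=q_{1}+q_{2}$ in the sense of (\ref{eq:qdec}). The first two terms of the bracket, combined with $q_{1}$, form an $L^{1}((c,\infty))$ function thanks to (\ref{eq:asysch}) and the first part of (\ref{eq:qdec}); hence
\begin{equation*}
\Bigl|\int_{c}^{x}\frac{\sin(2\theta(t))}{2k}\bigl[\lambda(p^{-1}-r)(t)+q_{1}(t)\bigr]\,dt\Bigr| \leq \frac{1}{2k}\bigl(\lambda\|p^{-1}-1\|_{1}+\lambda\|1-r\|_{1}+\|q_{1}\|_{1}\bigr)
\end{equation*}
uniformly in $x$. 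Everything now rests on bounding $I(x)=\int_{c}^{x}\frac{\sin(2\theta(t))}{2k}q_{2}(t)\,dt$.

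To handle $I(x)$ I would integrate by parts, using that the expression for $\theta'$ together with $1/p\to 1$, $r\to 1$ (in the $L^{1}$-averaged sense), and $q_{2}\to 0$ forces $\theta'(x)\geq k/2$ for all sufficiently large $x$. The formal identity $\sin(2\theta)\,dt = -d\cos(2\theta)/(2\theta')$ leads to
\begin{equation*}
I(x)=-\Bigl[\frac{q_{2}(t)\cos(2\theta(t))}{4k\theta'(t)}\Bigr]_{c}^{x} + \int_{c}^{x}\cos(2\theta(t))\, d\!\Bigl(\frac{q_{2}(t)}{4k\theta'(t)}\Bigr),
\end{equation*}
whose boundary term is controlled because $q_{2}(t)\to 0$ and $\theta'\geq k/2$. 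The differential $d(q_{2}/\theta')$ splits into a piece driven by $q_{2}'\in L^{1}$ (harmless) and a piece driven by the variation of $\theta'$. A Gronwall loop on $\log\rho$ then completes the boundedness argument. As an alternative, one may instead carry out the Liouville change of variables $y=\int_{c}^{x}\sqrt{r/p}\,ds$, $v(y)=(rp)^{1/4}u(x)$, which (after verifying the needed regularity of $p$ and $r$ from the assumptions) reduces $\tau u=\lambda u$ to a Schr\"odinger equation $-v_{yy}+\tilde q\, v=\lambda v$ with $\tilde q$ admitting a Stolz-type decomposition, and then invoke \cite[Theorem 6]{Stolz2} directly.

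The main obstacle is the integration-by-parts step: since $p^{-1}$, $r$, $q$ are only assumed locally integrable, $\theta'$ is a priori merely measurable, so $\theta''$ need not exist classically. I would circumvent this either by writing $u\cdot pu'=\tfrac{k}{2}\rho^{2}\sin(2\theta)$ and integrating the identity $(u^{2})'=2u(pu')/p$ directly in place of the formal $d\cos(2\theta)$ manoeuvre, or by partitioning $(c,\infty)$ into the successive intervals on which $\theta$ increases by $\pi$ and summing the contributions telescopically---both devices reduce the problematic term to one controlled by the $L^{1}$-perturbations $1/p-1$, $1-r$, $q_{1}$, and $q_{2}'$.
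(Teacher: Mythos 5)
Your Pr\"ufer set-up and your treatment of the $L^1$ pieces $\lambda(p^{-1}-1)$, $\lambda(1-r)$, $q_1$ are correct, but the proof has a genuine gap exactly where you locate ``the main obstacle'': the oscillatory integral $I(x)=\int_c^x \tfrac{\sin(2\theta)}{2k}\,q_2\,dt$ is never actually controlled, and none of the devices you propose works under the stated hypotheses. First, the pointwise bound $\theta'\geq k/2$ for large $x$ is false in general: (\ref{eq:asysch}) and (\ref{eq:qdec}) give only integral (averaged) control, so $1/p$ may be arbitrarily close to $0$ and $q_1$ may have arbitrarily tall spikes on sets of small measure recurring out to infinity; hence $\theta'=\tfrac{k}{p}\cos^2\theta+\tfrac{\lambda r-q}{k}\sin^2\theta$ can be small or even negative arbitrarily far out. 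In particular $\theta$ need not be monotone, so your fallback partition into ``successive intervals on which $\theta$ increases by $\pi$'' need not even exist, and the telescoping estimate would in any case require two-sided pointwise bounds on $\theta'$ that are unavailable. Second, the integration by parts itself requires differentiating $\theta'$, i.e.\ differentiating $p^{-1}$, $r$, $q$, which are merely locally integrable---you acknowledge this, but the substitute manoeuvres are only named, not carried out. Third, the Liouville-transform escape route is closed: the transformed potential $\tilde q$ contains derivatives of $(pr)^{1/4}$, and no such regularity can be ``verified from the assumptions'' because none is assumed; (\ref{eq:asysch}) is an integrability condition, not a smoothness condition, so \cite[Theorem 6]{Stolz2} cannot be invoked this way. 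Thus the core difficulty of the lemma remains open in your argument.

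The missing idea---and the paper's proof---is to tilt the energy by $q_2$ instead of fighting the oscillatory integral. The paper works with
\begin{equation*}
h(t)=\bigl(\lambda-q_2(t)\bigr)u(t)^2+\bigl(pu'(t)\bigr)^2,
\end{equation*}
which in your variables is $h=\rho^2(\lambda-q_2\sin^2\theta)$, comparable to $\lambda\rho^2$ once $|q_2|\leq\eps<\lambda$. Using $(pu')'=(q-\lambda r)u$ one finds
\begin{equation*}
h' = -q_2'u^2 + \Bigl[\,q_1+(q_2-\lambda)\Bigl(1-\tfrac{1}{p}\Bigr)+\lambda(1-r)\Bigr]\,2u\,pu',
\end{equation*}
and the point is that no bare $q_2$ term survives: the undifferentiated $q_2$ appears only in the product $(q_2-\lambda)(1-\tfrac{1}{p})$, which is in $L^1$ by (\ref{eq:asysch}), and the price paid is $-q_2'u^2$, controlled by $q_2'\in L^1$ from (\ref{eq:qdec}). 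Bounding $u^2$ and $2|u|\,|pu'|$ by multiples of $h$ on $[t_0,\infty)$ (where $|q_2|\leq\eps$) gives $|h'|\leq w\,h$ with $w\in L^1((t_0,\infty))$, so $\ln h$ has a finite limit at $\infty$ and $h$---hence $u$ and $pu'$---is bounded. This one modification of your energy $\lambda\rho^2=u^2\lambda+(pu')^2$ is precisely what closes the gap; everything else in your outline then becomes unnecessary.
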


\begin{proof}
It is sufficient to prove this result for real solutions $u \neq 0$. For any such solution, consider the function
\begin{equation}
h(t) : = \left( \lambda - q_2(t) \right) \, u(t)^2 + pu'(t)^2. 
\end{equation}
Given $0< \eps < \lambda$, there exists $t_0 \in (c, \infty)$ for which $\lambda - \eps \leq \lambda - q_2(t) \leq \lambda + \eps$ for
all $t \in [t_0, \infty)$. 
A short calculation shows that
\begin{equation}
h'(t) = -q_2'(t)u(t)^2 + \left[ q_1(t) + \left( q_2(t) - \lambda \right) \left( 1 - \frac{1}{p(t)} \right) + \lambda(1-r(t)) \right] 2 u(t) pu'(t),
\end{equation}
and therefore
\begin{equation}
|h'| \, \leq \, \left[ \frac{|q_2'|}{ \lambda - \eps} \, + \, \frac{|q_1|}{ \sqrt{ \lambda - \eps}} \, + \, \frac{1}{\sqrt{ \lambda - \eps}} \left( (\lambda + \eps) \left| 1 - \frac{1}{p} \right| \, + \, \lambda |1-r| \, \right) \right] h,
\end{equation}
for all $t \in [t_0, \infty)$. Thus the derivative of $\ln(h)$ is in $L^1((t_0, \infty))$, and hence, 
$\ln(h)$ has a finite limit at $\infty$. The same is then true for $h$, and thus, any such 
$u$ is bounded.
\end{proof}

As a consequence

\begin{theorem}
Let the coefficients of $\tau$ satisfy the general assumptions of (\ref{eq:baseass}). Moreover,
suppose that each of (\ref{eq:qass}), (\ref{eq:I-}), and (\ref{eq:gamfine}) hold.  

Then any self-adjoint extension $H$ of $\tau$ satisfies
\be
\sig_{ess}(H) = \sig_{ac}(H) = [0,\infty), \quad
\sig_{sc}(H) =\emptyset, \quad
\sig_{pp}(H) \subset (-\infty,0]
\ee
\end{theorem}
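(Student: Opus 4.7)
The plan is to obtain the result by combining the boundedness of solutions for $\lambda>0$ (provided by the previous Lemma) with the no-subordinate-solutions criterion of Lemma~\ref{lem:bdnosub}, and then to pin down the essential spectrum from the other side by an asymptotic-solutions argument.

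First I would handle the absolutely continuous spectrum. Fix $\lambda>0$. The preceding Lemma yields that every solution $u$ of $\tau u = \lambda u$ is bounded on $(c,\infty)$, and since (\ref{eq:asysch}) gives $r\to 1$ at infinity, $\sqrt{r}\,u$ is bounded on $(0,\infty)$ as well (the bound on the compact piece near $0$ being automatic by continuity). The hypotheses (\ref{eq:qass}), (\ref{eq:I-}), (\ref{eq:gamfine}) (plus monotonicity of $r$, which is in force by assumption) then allow me to invoke Lemma~\ref{lem:bdnosub}, giving $\lambda\in N(\tau)$. Thus $(0,\infty)\subseteq N(\tau)$. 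Theorem~\ref{thmnpmacsp} now tells me that the singular spectrum of $H$ assigns no weight to $(0,\infty)$ and that
\begin{equation*}
\sigma_{\mathrm{ac}}(H) = \overline{N(\tau)}^{\,\mathrm{ess}} \supseteq \overline{(0,\infty)}^{\,\mathrm{ess}} = [0,\infty).
\end{equation*}

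Next I would show the reverse inclusion $\sigma_{\mathrm{ess}}(H)\subseteq[0,\infty)$. For $\lambda<0$, the conditions (\ref{eq:asysch})--(\ref{eq:qdec}) guarantee that asymptotically $\tau u = \lambda u$ is a compact perturbation of $-u''=\lambda u$. A standard asymptotic-integration argument (Levinson's theorem, applied after the usual Liouville change of variable adapted to the weight $r$ and coefficient $p$) produces two solutions behaving like $e^{\pm\sqrt{-\lambda}\,x}$ at infinity. Hence $\tau$ is limit point at $\infty$ and there is exactly one linearly independent $L^2$-solution at $\infty$. Such $\lambda$ can therefore only be an isolated eigenvalue of finite multiplicity or lie in the resolvent set; in either case $\lambda\notin\sigma_{\mathrm{ess}}(H)$. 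This is the main technical obstacle, but it is a routine consequence of the asymptotic form of the coefficients under (\ref{eq:asysch})--(\ref{eq:qdec}). Combining with the previous step yields $\sigma_{\mathrm{ess}}(H)=\sigma_{\mathrm{ac}}(H)=[0,\infty)$.

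Finally I would read off the remaining assertions. For the point spectrum: if $\lambda>0$ were an eigenvalue of $H$, then the eigenfunction, being in $L^2$ near $\infty$, would be subordinate to any linearly independent solution, contradicting $\lambda\in N(\tau)$. Therefore $\sigma_{\mathrm{pp}}(H)\cap(0,\infty)=\emptyset$, i.e.\ $\sigma_{\mathrm{pp}}(H)\subseteq(-\infty,0]$. For the singular continuous spectrum: $\sigma_{\mathrm{sc}}(H)\cap(-\infty,0)=\emptyset$ because that set is disjoint from $\sigma_{\mathrm{ess}}(H)$ (only isolated eigenvalues of finite multiplicity occur there), while $\sigma_{\mathrm{sc}}(H)\cap(0,\infty)=\emptyset$ by the support statement of Theorem~\ref{thmnpmacsp} already used above. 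The only candidate point left is $\{0\}$, which cannot carry any singular continuous measure since such measures have no atoms. Hence $\sigma_{\mathrm{sc}}(H)=\emptyset$, completing the proof.
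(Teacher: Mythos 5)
There is a genuine gap, and it sits exactly at the subtle point of this theorem. You justify invoking Lemma~\ref{lem:bdnosub} by listing (\ref{eq:qass}), (\ref{eq:I-}), (\ref{eq:gamfine}) ``plus monotonicity of $r$, which is in force by assumption.'' But monotonicity of $r$ is \emph{not} among the hypotheses: the theorem assumes only (\ref{eq:baseass}), (\ref{eq:qass}), (\ref{eq:I-}), and (\ref{eq:gamfine}) (together with the standing assumptions (\ref{eq:asysch}) and (\ref{eq:qdec}) of this section), and it deliberately drops the requirement that $r$ be non-decreasing which appears in Lemma~\ref{lem:bdnosub}. That monotonicity is genuinely used there: it enters through Proposition~\ref{prop:bdubdu'}, in the step $pu'(\xi_n)/\sqrt{r(t)} \geq n \sqrt{r(\xi_n)}/\sqrt{r(t)} \geq n$. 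So you cannot apply Lemma~\ref{lem:bdnosub} as a black box, and as written your proof of $(0,\infty) \subseteq N(\tau)$ does not go through.

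The repair uses only material you already have in hand. The previous lemma gives boundedness not just of $u$ but also of $pu'$ on $(c,\infty)$ for $\lambda>0$; since (\ref{eq:asysch}) together with (\ref{eq:gamfine}) forces $r$ to be bounded above and away from zero near $\infty$, this is exactly the conclusion of Proposition~\ref{prop:bdubdu'} (boundedness of $pu'/\sqrt{r}$), obtained here without any monotonicity. One then runs the rest of the proof of Lemma~\ref{lem:bdnosub}: the Wronskian argument yields the pointwise lower bound in (\ref{eq:uu'bd}), and Proposition~\ref{prop:l2bd} --- which requires only (\ref{eq:I-}) and (\ref{eq:gamfine}) --- gives linear growth of all $L^2$ norms, hence $\lambda \in N(\tau)$. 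This is precisely what the paper's proof means by its remark that the condition ``$r$ non-decreasing'' is not needed because the conclusion of Proposition~\ref{prop:bdubdu'} is already part of the previous lemma. The remainder of your argument is sound and close to the paper's in spirit: where you sketch a Levinson-type asymptotic-integration argument for $\lambda<0$ and recover $\sigma_{ess}(H) \subseteq [0,\infty)$ (with the reverse inclusion coming from $\sigma_{ac}(H) \supseteq [0,\infty)$), the paper simply cites Weidmann \cite[Thm.~15.2]{wdln} for $\sigma_{ess}(H)=[0,\infty)$; your bookkeeping for $\sigma_{sc}$ and $\sigma_{pp}$ is the expansion of the paper's ``hence the result follows.''
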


\begin{proof}
The claim about the essential spectrum follows from \cite[Thm.~15.2]{wdln}. By the previous
lemma the spectrum is purely absolutely continuous on $(0,\infty)$ and hence the result follows.
The condition that $r$ is non-decreasing is not needed since the conclusion of
Proposition~\ref{prop:bdubdu'} is already part of the previous lemma.
\end{proof}

%
%
%

\section{Radially Symmetric Quantum Trees}

Let $\Gamma$ be a rooted metric tree associated with the branching numbers $b_n$ at the
$n$'th level and distances $t_n$ of the vertices at the $n$'th level. We refer to \cite{so1}
for further details. We will set $t_0=0$ and assume, without loss of generality, that $\Gamma$
is regular in the sense of \cite{so1}, that is, $b_0 = 1$ and $b_k \geq 2$ for $k \geq 1$.
Furthermore, we will assume that the height
\begin{equation}
h_\Gamma = \lim_{n \to \infty} t_n = \infty
\end{equation}
since otherwise the spectrum of the Laplacian is purely discrete by \cite[Thm~4.1]{so1}.
The branching function, $g_{\Gamma}(t)$ is defined by
\begin{equation}
g_\Gamma(t) =  \prod_{n: t_n < t} b_n.
\end{equation}
Associated with $\Gamma$ is the Laplacian $-\Delta$ with a Dirichlet
boundary condition at the root and Kirchhoff boundary conditions at the vertices. We will consider
$-\Delta+V$, where $V$ is a radially symmetric potential depending only on the distance from
the root.

Then it is shown in \cite{car2,so1} that the study of $-\Delta+V$ can be reduced to the
Sturm--Liouville operators
\be
A = A_0 + V, \qquad
A_0 = \frac{1}{g_{\Gamma}}\left(-\frac{d}{dx}g_{\Gamma}\frac{d}{dx}\right), 
\quad x\in(0,\infty),
\ee
with a Dirichlet boundary condition at $x=0$. Here we think of $A_0$ as the Friedrich's extension
when restricted to functions with compact support and $V$ as some relatively form bounded
potential such that the operator sum is declared as a form sum.

\begin{theorem}[\cite{so1}]
Let $\Gamma$ be a metric tree generated by the sequences $\left\{t_n\right\}$ and $\left\{b_n\right\}$, then
\begin{equation}
-\Delta +V  \sim A \oplus \bigoplus_{k=1}^{\infty} (A|_{(t_k,\infty)})^{[b_0 \cdots b_{k-1}\cdot(b_k -1)]}.
\end{equation}
Here $A|_{(t_k,\infty)}$ denotes the restriction of $A$ to the interval $(t_k,\infty)$ with a
Dirichlet boundary condition at $x=t_k$ and $A^{[r]}$ denotes the orthogonal sum of $r$ copies
of the self-adjoint operator $A$ and $\sim$ denotes unitary equivalence.
\end{theorem}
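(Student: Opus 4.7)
The plan is to exhibit an explicit orthogonal decomposition of $L^2(\Gamma)$ into subspaces invariant under $-\Delta+V$, on each of which the restricted operator is unitarily equivalent to one of the summands in the claimed decomposition. The underlying symmetry input is this: at any vertex $v$ of level $k$, the symmetric group $S_{b_k}$ permutes the $b_k$ identical subtrees rooted at the children of $v$, so the space $\C^{b_k}$ of coefficients on these children splits as the trivial (diagonal) line plus the $(b_k-1)$-dimensional zero-sum complement $\{\alpha\in\C^{b_k} : \sum_i \alpha_i=0\}$. The diagonal summand will produce the radial modes, while the zero-sum summand will produce modes forced to vanish at $v$, which is exactly the Dirichlet condition at $x=t_k$.

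Concretely, let $\mathcal{H}_0\subset L^2(\Gamma)$ denote the subspace of radially symmetric functions. The evident map identifies $\mathcal{H}_0$ unitarily with $L^2((0,\infty),g_\Gamma\,dx)$ and intertwines $-\Delta+V$ with $A$, via the standard radial reduction. For each $k\geq 1$, each vertex $v$ of level $k$, and each $\alpha$ in a fixed orthonormal basis of the zero-sum subspace of $\C^{b_k}$, define $\mathcal{H}_{k,v,\alpha}$ to consist of those $f\in L^2(\Gamma)$ which are supported on the $b_k$ subtrees $T_1,\ldots,T_{b_k}$ below $v$, are radially symmetric on each $T_i$ with respect to the distance to the root, and take the form $\alpha_i\,u$ on $T_i$ for one common radial function $u$. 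The assignment $u\mapsto f$ identifies $\mathcal{H}_{k,v,\alpha}$ unitarily with $L^2((t_k,\infty),g_\Gamma\,dx)$, because the branching structure of the sub-tree rooted at $v$ is identical to that of $\Gamma$ beyond level $k$.

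Three verifications remain. First, $L^2(\Gamma)$ is the orthogonal sum of $\mathcal{H}_0$ and all the $\mathcal{H}_{k,v,\alpha}$: this follows by inductively decomposing the tuple $(f|_{T_1},\ldots,f|_{T_{b_k}})$ at every vertex into its diagonal and zero-sum parts and iterating level by level. Second, each $\mathcal{H}_{k,v,\alpha}$ lies in the form domain of $-\Delta+V$ and the restricted form is that of $A|_{(t_k,\infty)}$ with Dirichlet condition at $t_k$: continuity of $f$ at $v$ forces $\alpha_i u(t_k) = \alpha_j u(t_k)$ for all $i,j$, which combined with $\sum_i \alpha_i = 0$ (so that not all $\alpha_i$ are equal) forces $u(t_k)=0$; the Kirchhoff condition at $v$ reduces to $(\sum_i \alpha_i)\,u'(t_k^+)=0$ and is automatic; and at all deeper vertices the natural conditions carry over unchanged via the identification of the sub-tree with $\Gamma$ beyond $t_k$. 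Counting multiplicities, the $b_0 b_1\cdots b_{k-1}$ vertices at level $k$ each contribute $b_k-1$ basis elements $\alpha$, yielding the factor $b_0 b_1\cdots b_{k-1}(b_k-1)$.

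The main obstacle I anticipate is the careful handling at the level of the quadratic form, since $V$ is only relatively form bounded and $A$ is defined as a form sum: one has to verify that the proposed decomposition is compatible with the form domain of the Friedrichs extension, and that the quadratic form of $-\Delta+V$ splits as an orthogonal sum of the forms of $A$ and of the $A|_{(t_k,\infty)}$ on the corresponding subspaces. This requires checking that the continuity and zero-sum conditions at each vertex are form-level statements, and that the radial reduction correctly produces the weighted Sturm--Liouville expression $A_0$ in every sector.
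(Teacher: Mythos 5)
The paper offers no proof of this theorem: it is quoted directly from Solomyak \cite{so1} (see also Carlson \cite{car2}), so there is no internal argument to compare against. Your proof is essentially the one given in that reference — the orthogonal decomposition of $L^2(\Gamma)$ into the radial sector plus, for each vertex $v$ of level $k$, zero-sum sectors spanned by functions of the form $\alpha_i u$ on the $b_k$ subtrees below $v$, with the Dirichlet condition at $t_k$ forced by continuity and the Kirchhoff condition holding automatically — and your multiplicity count $b_0\cdots b_{k-1}(b_k-1)$ is correct. One small point to tighten: the natural weight on the sector attached to $v$ is the branching function of the subtree, namely $g_\Gamma(x)/(b_0\cdots b_k)$ for $x>t_k$, not $g_\Gamma(x)$ itself; since the two differ by a constant factor $c=b_0\cdots b_k$, the map $u\mapsto \sqrt{c}\,u$ supplies the missing unitary onto $L^2((t_k,\infty),g_\Gamma\,dx)$, and the differential expression is unaffected because the constant cancels in $g^{-1}(g u')'$. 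Your closing concern about the form-sum definition is the right thing to check, and it goes through precisely because the vertex conditions you use (continuity, hence the zero-sum constraint $u(t_k)=0$) are statements about the form domain, so the quadratic form of $-\Delta+V$ splits along your decomposition.
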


Since we have $\sig_{ess}(A|_{(t_k,\infty)})=\sig_{ess}(A)$ and $\sig_{ac}(A|_{(t_k,\infty)})=\sig_{ac}(A)$
we can restrict our attention to $A$. As a second application of our results we note

\begin{theorem}\label{thm:qt}
Let $\Gamma$ be a metric tree and $-\Delta +V$ as before. Suppose $\inf_n (t_{n+1}-t_n)>0$ and
$\sup_n b_n <\infty$. Then
\be
\sig_{ac}(A) = \ol{\{ \lam\in\R | \sqrt{g_\Gamma} u \mbox{ is bounded for all solutions of } A u = \lam u\}}^{ess}
\ee
\end{theorem}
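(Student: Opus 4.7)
The plan is to identify $A$ with a Sturm--Liouville operator on $(0,\infty)$ satisfying the hypotheses of Lemma~\ref{lem:bdnosub}, and then combine that lemma with Theorem~\ref{thmnpmacsp}. Concretely, $A$ corresponds to $\tau = \frac{1}{r}(- \frac{d}{dx} p \frac{d}{dx} + q)$ with $p = r = g_{\Gamma}$ and $q/r = V$. Since $g_\Gamma \equiv 1$ on $(0,t_1)$, the endpoint $a=0$ is regular, and $b = \infty$ is limit point either by standard criteria on $V$ or a posteriori from the remark following the proof of Lemma~\ref{lem:bdnosub}.

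The first task is to verify the hypotheses of Lemma~\ref{lem:bdnosub}. The monotonicity of $r = g_\Gamma$ is immediate from $b_n \geq 1$. Since $r = p$, we have $\int_x^{x+1} r/p \, dt = 1$, so (\ref{eq:I-}) holds with $I_- = 1$. For (\ref{eq:gamfine}), note that $g_\Gamma$ is constant on each $(t_n, t_{n+1})$ and can only change across a unit interval $[x-1,x+1]$ at vertices $t_n$ lying inside it; the hypothesis $\delta := \inf_n (t_{n+1}-t_n) > 0$ allows at most $\lceil 2/\delta \rceil$ such vertices, and $B := \sup_n b_n < \infty$ bounds each multiplicative jump, so $\gamma \leq B^{\lceil 2/\delta \rceil}$. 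Condition (\ref{eq:qass}) on $q/r = V$ is part of the standing radial form-boundedness assumption on $V$.

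Having verified the hypotheses, Lemma~\ref{lem:bdnosub} yields $S \subseteq N(\tau)$, where $S$ denotes the bounded-solution set on the right-hand side of the theorem. Taking essential closures and invoking Theorem~\ref{thmnpmacsp},
\begin{equation*}
\overline{S}^{\,ess} \subseteq \overline{N(\tau)}^{\,ess} = \sigma_{ac}(A).
\end{equation*}
For the reverse inclusion $\sigma_{ac}(A) \subseteq \overline{S}^{\,ess}$, using $\sigma_{ac}(A) = \overline{M_{ac}}^{\,ess}$ it suffices to show $M_{ac} \subseteq S$ modulo a Lebesgue-null set. For a.e.\ $\lambda \in M_{ac}$ the boundary value $m_b(\lambda+i0)$ exists in the open upper half-plane with finite, positive imaginary part, and Lemma~\ref{lemjlineq} then forces the two-sided comparison $\|s(\lambda)\|_{(0,x)} \asymp \|c(\lambda)\|_{(0,x)}$ as $x \to \infty$. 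Combined with the uniform local control on the weights supplied by (\ref{eq:I-})--(\ref{eq:gamfine}) and the derivative estimate of Lemma~\ref{lem:derbd}, this averaged comparison can be upgraded to pointwise boundedness of $\sqrt{r}\,u$ for every solution, placing $\lambda$ in $S$.

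The main obstacle is precisely this final upgrade: passing from the $L^2$-averaged comparability of $s$ and $c$ (which only rules out subordinate solutions) to uniform pointwise control on $\sqrt{r(x)}\,|u(x)|$. The quantum-tree hypotheses $\inf_n(t_{n+1}-t_n) > 0$ and $\sup_n b_n < \infty$ are essential here: they prevent wild behavior of $g_\Gamma$ on unit scales, so that a Behncke--Stolz / Last--Simon style argument, adapted from the Schr\"odinger setting to this weighted Sturm--Liouville equation, can be carried out.
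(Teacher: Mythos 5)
Your reduction and your first inclusion are correct, and they coincide with what the paper actually does: Theorem~\ref{thm:qt} is stated there without proof, as a direct application of the earlier machinery, and the intended argument is exactly yours --- set $p=r=g_\Gamma$, $q=g_\Gamma V$ (so $q/r=V$), note that $a=0$ is regular since $g_\Gamma\equiv 1$ near $0$, verify the hypotheses of Lemma~\ref{lem:bdnosub} (monotonicity of $g_\Gamma$; (\ref{eq:I-}) with $I_-=1$ because $p=r$; (\ref{eq:gamfine}) with $\gam\le B^{\lfloor 2/\delta\rfloor+1}$, which is precisely where $\inf_n(t_{n+1}-t_n)>0$ and $\sup_n b_n<\infty$ enter; (\ref{eq:qass}) from the standing assumptions on $V$), and conclude $S\subseteq N(\tau)$, hence $\ol{S}^{ess}\subseteq\ol{N(\tau)}^{ess}=\sig_{ac}(A)$ by Theorem~\ref{thmnpmacsp}.

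The reverse inclusion $\sig_{ac}(A)\subseteq\ol{S}^{ess}$ is where your proposal has a genuine gap, and the route you sketch cannot be carried out. What subordinacy theory gives for a.e.\ $\lam\in M_{ac}$ is exactly what you state: $0<\im(m_b(\lam+\I 0))<\infty$, equivalently, via Lemma~\ref{lemjlineq}, two-sided comparability of $\|s(\lam)\|_{(0,x)}$ and $\|c(\lam)\|_{(0,x)}$. This is averaged, $L^2$-level information, and there is no mechanism --- not in this paper, not in Lemma~\ref{lem:derbd} (which bounds $pu'$ \emph{by} local norms of $u$, the opposite of what you need, since it never bounds the size of $u$ itself), and not in the Last--Simon or Behncke--Stolz circle of ideas --- that upgrades it to pointwise boundedness of $\sqrt{r}\,u$. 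The known results in that converse direction (Last--Simon) yield only Ces\`aro-averaged bounds on transfer matrices for a.e.\ $\lam$ with respect to the a.c.\ part, which is strictly weaker than boundedness of all solutions; and the full implication you assert --- that a.e.\ on the a.c.\ spectrum all solutions are bounded --- is precisely the Schr\"odinger conjecture, which was open when this paper was written and was subsequently disproved (Avila), so no argument of the kind you invoke can exist. You should also be aware that the paper itself never establishes this direction: its own results deliver only the inclusion you proved rigorously, together with pure absolute continuity on intervals contained in $S$ (by the remark following Theorem~\ref{thmnpmacsp}); and its later corollary on $L^1$ perturbations can be obtained from that inclusion plus subordinacy directly, since outside the perturbed bands there is an exponentially decaying, hence subordinate, solution, so those energies are not in $N(\tau)$. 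In short, your rigorous content matches everything the paper's implicit proof provides; the defect is the unsupported claim that the remaining inclusion, and hence the stated equality, can be completed by the cited techniques.
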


Next we want to consider the homogenous tree $\Gamma_0$, given by the following sequences:
\begin{equation}
t_n =  c\,n \quad\mbox{and}\quad b_n = b.
\end{equation}
In this case
\begin{equation}
g_{\Gamma_0}(t) = b^{\floor{t/c}}
\end{equation}
and all $A_0 |_{(t_k,\infty)}$ are unitarily equivalent to $A_0$.

\begin{lemma}[\cite{car1}]
For $\Gamma_0$ the spectrum of $A_0$ is given by
\begin{align}\nn
\sigma_{ac}(A_0) &= \bigcup_{l\in\N} \left[\left(\frac{\pi(l-1) + \theta}{c}\right)^2, \left(\frac{\pi l - \theta}{c}\right)^2\right],\\
\sigma_{sc}(A_0) & = \emptyset,\\ \nn
\sigma_{pp}(A_0) &= \{ \Big(\frac{\pi l}{c}\Big)^2 | l \in \N \}.
\end{align}
where
$$
\theta = \arccos\left(\frac{2}{b^{1/2} + b^{-1/2}}\right).
$$
\end{lemma}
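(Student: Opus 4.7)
The plan is to reduce to a periodic free-Schr\"odinger problem by a Liouville-type substitution and then combine Floquet--Bloch theory with Theorem~\ref{thm:qt}. Setting $w(x) = \sqrt{g_{\Gamma_0}(x)}\, u(x)$, the eigenvalue equation $A_0 u = \lam u$ becomes the free equation $-w'' = \lam w$ on each edge $(cn, c(n+1))$ (where $g_{\Gamma_0} \equiv b^n$ is constant), subject to the vertex jumps
\[
w(cn^+) = b^{1/2}\, w(cn^-), \qquad w'(cn^+) = b^{-1/2}\, w'(cn^-), \qquad n \geq 1.
\]
Under this substitution $\sqrt{g_{\Gamma_0}}\, u$ is bounded iff $w$ is bounded, and $\int |u|^2 g_{\Gamma_0}\, dx = \int |w|^2\, dx$, so the problem becomes a periodic Schr\"odinger problem on $(0,\infty)$ with period $c$ and constant vertex jump matrix $J = \mathrm{diag}(b^{1/2}, b^{-1/2})$.

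Next, for $\lam = k^2 > 0$ I would compute the monodromy over one period as $M(k) = J\, M_0(k)$, where $M_0(k)$ is the free transfer matrix on $[0,c]$. A direct calculation gives $\det M(k) = 1$ and
\[
\Delta(k) := \tr M(k) = (b^{1/2} + b^{-1/2})\cos(kc).
\]
By Floquet theory, all solutions $w$ of the system are bounded iff the eigenvalues of $M(k)$ lie on the unit circle, i.e.\ $|\Delta(k)| \leq 2$, equivalently $|\cos(kc)| \leq \cos\theta$ with $\theta = \arccos(2/(b^{1/2}+b^{-1/2}))$; this gives $kc \in \bigcup_{l\in\N}[(l-1)\pi+\theta,\, l\pi-\theta]$. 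Since $t_{n+1}-t_n = c > 0$ and $b_n = b < \infty$, the hypotheses of Theorem~\ref{thm:qt} are satisfied, and taking the essential closure of the corresponding $\lam$-set yields the claimed $\sig_{ac}(A_0)$.

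For the point spectrum I would observe that at $\lam_l = (\pi l/c)^2$ one has $\sin(kc) = 0$ and $\cos(kc) = \pm 1$, so $M_0(k) = \pm I$ and $M(k) = \pm J$; the contracting eigenvalue $\pm b^{-1/2}$ has eigenvector $(0,1)^T$, which coincides exactly with the Dirichlet datum $w(0)=0$ at the root. The resulting solution vanishes at every vertex and has amplitude of order $b^{-n/2}$ on $(cn, c(n+1))$, so $\int|w|^2\,dx < \infty$ and $\lam_l$ is an eigenvalue. Outside the bands, $|\Delta(k)| > 2$ splits the dynamics into one contracting and one expanding direction, and the $L^2$-condition forces the Dirichlet datum $(0,1)^T$ to lie along the contracting eigenvector; a short computation shows this only happens at the $\lam_l$. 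Absence of singular continuous spectrum is a consequence of classical Floquet theory for periodic half-line Sturm--Liouville operators, while $\sig(A_0) \subseteq [0,\infty)$ is immediate from positivity of the form $\int g_{\Gamma_0} |u'|^2\, dx$.

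The main obstacle will be the careful analysis at the band edges $\cos(kc) = \pm\cos\theta$, where $M(k)$ has the double eigenvalue $\pm 1$: since $b > 1$, the matrix $M(k)\mp I$ is a nonzero nilpotent (Jordan block), so some solutions grow linearly and $w$ fails to be bounded there. Fortunately this band-edge set is a measure-zero subset of $\R$ and is absorbed into the essential closure in Theorem~\ref{thm:qt}, so it does not affect the description of $\sig_{ac}(A_0)$.
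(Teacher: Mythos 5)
Your proposal is correct and follows essentially the same route as the paper: a Floquet/transfer-matrix analysis of the vertex monodromy combined with the subordinacy criterion of Theorem~\ref{thm:qt}, leading to the identical band condition (your $|\Delta(k)|\le 2$ is precisely the paper's $\frac{(1+b)^2}{4b}\cos^2(\sqrt{\lam})\le 1$). The only differences are presentational: you first pass to the gauge $w=\sqrt{g_{\Gamma_0}}\,u$ (which the paper itself notes, in the remark following its proof, is a unitary map to a periodic $\delta'$-interaction problem), obtaining a unimodular monodromy with discriminant $(b^{1/2}+b^{-1/2})\cos(kc)$, whereas the paper keeps the non-unimodular matrix $M(z)$ of determinant $1/b$ and reads off the Floquet exponent $\alpha(z)$ directly; moreover your treatment of $\sigma_{pp}$ and $\sigma_{sc}$ (Dirichlet datum along the contracting eigenvector, Jordan-block growth at band edges) is actually more detailed than the paper's proof, which only argues the absolutely continuous part and defers the rest to \cite{car1}.
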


\begin{proof}
For the sake of completeness and to introduce some items to be used later we provide the
elementary proof. We assume $c=1$ for notational simplicity.

The solutions of the differential equation $A_0 u = z$ satisfy $-u''(x)= z u(x)$ for $x\not\in\N$
and at $x=n\in\N$ we have the matching conditions
$$
u(n-) = u(n+) \mbox{ and } u'(n-) =bu'(n+), \qquad n \in \N.
$$
Thus the transfer matrix of $A_0 u = z$ is given by
\begin{align*}
T_0(z,x,0) &= \begin{pmatrix}
\cos(\sqrt{z}y) & -\sqrt{z} \sin(\sqrt{z}y) \\
\frac{1}{\sqrt{z}} \sin(\sqrt{z}y) & \cos(\sqrt{z}y)
\end{pmatrix} M(z)^n, \quad n=\floor{x},\: y= x-n,\\
M(z) &= \begin{pmatrix}
\frac{1}{b}\cos(\sqrt{z}) & -\frac{\sqrt{z}}{b} \sin(\sqrt{z}) \\
\frac{1}{\sqrt{z}} \sin(\sqrt{z}) & \cos(\sqrt{z})
\end{pmatrix}.
\end{align*}
In particular there are two solutions
\be\label{eqsola0}
u_{0,\pm}(z,x) = \ti{u}_{0,\pm}(z,x) \frac{\E^{\pm \alpha(z) x}}{\sqrt{g_{\Gamma_0}(x)}},
\ee
where $\ti{u}_\pm(z,x)$ is bounded and
$$
\alpha(z) = \log\left( \frac{1+b}{2\sqrt{b}} \cos(\sqrt{z}) + \sqrt{\frac{(1+b)^2}{4b}  \cos^2(\sqrt{z})-1}\right)
$$
with branch of the root chosen such that $\re(\alpha)\ge 0$. Hence the absolutely continuous spectrum
is given by $\sig_{ac} = \{\lam\in\R| \re(\alpha(\lam))=0\} = \{\lam\in\R| \frac{(1+b)^2}{4b}  \cos^2(\sqrt{\lam})\le 1\}$.
\end{proof}

Note that the unitary operator $U: L^2((0,\infty), g_{\Gamma_0}dx) \to L^2(0,\infty)$ given by
$u(x) = \sqrt{g_{\Gamma_0}(x)} u(x)$ maps $A_0$ to a Schr\"odinger operator
with a periodic $\delta'$ interaction. Hence the appearance of the band structure and
the similarity to periodic operators is no coincidence.

\begin{corollary}
Suppose
\be
V \in L^1(0,\infty),
\ee
then the essential spectrum of $A=A_0+V$ is given by
\be
\sigma_{ac}(A) = \sigma_{ac}(A_0)
\ee
and the essential spectrum is purely absolutely continuous in the interior. In particular
$\sig_{sc}(A)=\emptyset$.
\end{corollary}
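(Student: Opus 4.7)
The plan is to apply Theorem \ref{thm:qt} together with the remark after Theorem \ref{thmnpmacsp} (pure absolutely continuous spectrum on open subsets of $N(\tau)$) by showing that for every $\lam$ in the open interior of $\sig_{ac}(A_0)$, every solution of $Au=\lam u$ has $\sqrt{g_{\Gamma_0}}\,u$ bounded on $[0,\infty)$. A standard trace-ideal / Kato--Rosenblum argument then supplies the reverse inclusion.

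First I would fix $\lam$ in the open interior of $\sig_{ac}(A_0)$. There $\re(\alpha(\lam))=0$ and $\alpha(\lam)\notin\I\pi\Z$, so the two Floquet-type solutions $u_{0,\pm}(\lam,x)=\ti u_{0,\pm}(\lam,x)\E^{\pm\alpha(\lam)x}/\sqrt{g_{\Gamma_0}(x)}$ of \eqref{eqsola0} are linearly independent and $\sqrt{g_{\Gamma_0}}\,u_{0,\pm}=\ti u_{0,\pm}\E^{\pm\alpha(\lam)x}$ are uniformly bounded. Writing $Au=\lam u$ as $A_0u=(\lam-V)u$ and applying variation of parameters, any solution may be written $u=c_+u_{0,+}+c_-u_{0,-}$ with
$$
c_\pm'(x)=\mp\frac{g_{\Gamma_0}(x)\,V(x)\,u(x)\,u_{0,\mp}(\lam,x)}{W},
$$
where $W=u_{0,+}\,g_{\Gamma_0}u_{0,-}'-g_{\Gamma_0}u_{0,+}'\,u_{0,-}$ is the (non-zero, constant) generalized Wronskian, continuous across the vertices because $u$ and $g_{\Gamma_0}u'$ are. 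Using $g_{\Gamma_0}|u\,u_{0,\mp}|=(\sqrt{g_{\Gamma_0}}|u|)(\sqrt{g_{\Gamma_0}}|u_{0,\mp}|)\le C(|c_+|+|c_-|)$, one gets $|c_\pm'(x)|\le C'|V(x)|(|c_+(x)|+|c_-(x)|)$. Since $V\in L^1(0,\infty)$, Gronwall's lemma gives $c_\pm$ bounded, hence $\sqrt{g_{\Gamma_0}}\,u$ is bounded.

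By Theorem \ref{thm:qt} the interior of $\sig_{ac}(A_0)$ then lies inside the minimal support it provides, and by the remark after Theorem \ref{thmnpmacsp} the spectrum of $A$ is purely absolutely continuous on each open band. Taking essential closure yields $\sig_{ac}(A)\supseteq\sig_{ac}(A_0)$ and absence of singular spectrum on the interior. For the reverse inclusion, the assumption $V\in L^1$ makes the resolvent difference $(A-\I)^{-1}-(A_0-\I)^{-1}$ trace class, so Kato--Rosenblum delivers unitary equivalence of the absolutely continuous parts and $\sig_{ac}(A)=\sig_{ac}(A_0)$; combined with the purity on the interior of each band this forces $\sig_{sc}(A)=\emptyset$.

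The main obstacle is technical rather than conceptual: one must carefully handle the vertex points $x=t_n$ where $g_{\Gamma_0}$ jumps, so that both the variation-of-parameters formula and the Wronskian identity remain valid in the sense of the Kirchhoff matching conditions (continuity of $u$ and of $g_{\Gamma_0}u'$) rather than classical smoothness. This is the same mechanism already used in the transfer-matrix calculation $M(z)$ for $A_0$ and carries over cleanly because the generalized Wronskian involves $g_{\Gamma_0}u'$, which is continuous across the $t_n$. A minor point is verifying $\alpha(\lam)\notin\I\pi\Z$ on the interior, but this is immediate from the explicit formula for $\alpha$ and the band description.
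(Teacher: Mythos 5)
Your proposal is correct in outline, and its first half coincides with the paper's own argument: the paper also proves the key inclusion by variation of constants applied to $u=c_+u_{0,+}+c_-u_{0,-}$ with the Floquet solutions (\ref{eqsola0}), using $V\in L^1$ to control the resulting integral equation (the paper phrases this via the contraction principle and gets full asymptotics, you phrase it via Gronwall and get only boundedness, which indeed suffices for the inclusion $\sig_{ac}(A)\supseteq\sig_{ac}(A_0)$ through Theorem~\ref{thm:qt}). Where you genuinely diverge is the reverse inclusion. The paper gets it for free from the same computation: away from the band edges, including real $\lam$ in the spectral gaps, the perturbed solutions behave asymptotically like $u_{0,\pm}$, where now $\re(\alpha(\lam))>0$; hence one solution is exponentially unbounded (equivalently, a subordinate solution exists), so these $\lam$ drop out of the set in Theorem~\ref{thm:qt}, and taking the essential closure gives $\sig_{ac}(A)\subseteq\sig_{ac}(A_0)$ with no additional machinery. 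You instead invoke a trace-class resolvent difference plus Kato--Rosenblum. That route is viable and has the side benefit of giving $\sig_{ess}(A)=\sig_{ess}(A_0)$ by Weyl's theorem (useful when arguing $\sig_{sc}(A)=\emptyset$, since it confines any singular spectrum to the closed bands), but note that the trace-class claim is not a citation-level fact here: the standard references treat $p=r=1$, whereas $A$ acts in $L^2((0,\infty),g_{\Gamma_0}\,dx)$ with exponentially growing weight, so you must actually establish the Green's function bound $|G_0(\I,x,y)|\le C\E^{-\kappa|x-y|}/\sqrt{g_{\Gamma_0}(x)g_{\Gamma_0}(y)}$ (which makes $|V|^{1/2}(A_0-\I)^{-1}$ Hilbert--Schmidt in the weighted space) and use the relative form-boundedness of $V$ in a double resolvent identity to handle the perturbed factor. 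Since that Green's function estimate again comes from the very Floquet asymptotics you already have, the paper's route is the more economical one; yours trades that for reliance on abstract scattering theory, at the cost of one nontrivial estimate you currently only assert.
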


\begin{proof}
Using $V \in L^1(0,\infty)$ one can use standard techniques (derive an integral equation using
variation of constants and solve it using the contraction principle) to show that the equation
$A u = z$ for $z\in\C$ away from the band edges has solutions which asymptotically look like
the solutions $u_{0,\pm}(z,x)$ of $A_0 u = z$ given in (\ref{eqsola0}). Hence the result follows from
Theorem~\ref{thm:qt}.
\end{proof}

Note that the results from \cite{kt} apply in this situation to determine when
a perturbation introduces a finite, respectively, infinite number of eigenvalues into
the spectral gaps.

\subsection*{Acknowledgments.}
We are indebted to the referee for constructive remarks.

\end{document}